\definecolor{mahogany}{cmyk}{0, 0.77, 0.87, 0}
\definecolor{salmon}{cmyk}{0, 0.53, 0.38, 0}
\definecolor{melon}{cmyk}{0, 0.46, 0.50, 0}
\definecolor{yellowgreen}{cmyk}{0.44, 0, 0.74, 0}
\definecolor{brickred}{cmyk}{0, 0.89, 0.94, 0.28}
\definecolor{OliveGreen}{cmyk}{0.64, 0, 0.95, 0.40}
\definecolor{RawSienna}{cmyk}{0, 0.72, 1.0, 0.45}
\definecolor{ZurichRed}{rgb}{1, 0, 0} 
\begin{document}

\newtheorem{lemma}[thm]{Lemma}
\newtheorem{proposition}{Proposition}
\newtheorem{theorem}{Theorem}[section]
\newtheorem{deff}[thm]{Definition}
\newtheorem{case}[thm]{Case}
\newtheorem{prop}[thm]{Proposition}
\newtheorem{example}{Example}

\newtheorem{corollary}{Corollary}

\theoremstyle{definition}
\newtheorem{remark}{Remark}

\numberwithin{equation}{section}
\numberwithin{definition}{section}
\numberwithin{corollary}{section}

\numberwithin{theorem}{section}

\numberwithin{remark}{section}
\numberwithin{example}{section}
\numberwithin{proposition}{section}

\newcommand{\gap}{\lambda_{2,D}^V-\lambda_{1,D}^V}
\newcommand{\gapR}{\lambda_{2,R}-\lambda_{1,R}}
\newcommand{\bD}{\mathrm{I\! D\!}}
\newcommand{\calD}{\mathcal{D}}
\newcommand{\calA}{\mathcal{A}}

\newcommand{\conjugate}[1]{\overline{#1}}
\newcommand{\abs}[1]{\left| #1 \right|}
\newcommand{\cl}[1]{\overline{#1}}
\newcommand{\expr}[1]{\left( #1 \right)}
\newcommand{\set}[1]{\left\{ #1 \right\}}

\newcommand{\calC}{\mathcal{C}}
\newcommand{\calE}{\mathcal{E}}
\newcommand{\calF}{\mathcal{F}}
\newcommand{\Rd}{\mathbb{R}^d}
\newcommand{\BR}{\mathcal{B}(\Rd)}
\newcommand{\R}{\mathbb{R}}

\newcommand{\al}{\alpha}
\newcommand{\RR}[1]{\mathbb{#1}}
\newcommand{\bR}{\mathrm{I\! R\!}}
\newcommand{\ga}{\gamma}
\newcommand{\om}{\omega}
\newcommand{\A}{\mathbb{A}}
\newcommand{\bH}{\mathbb{H}}

\newcommand{\bb}[1]{\mathbb{#1}}
\newcommand{\bI}{\bb{I}}
\newcommand{\bN}{\bb{N}}

\newcommand{\uS}{\mathbb{S}}
\newcommand{\M}{{\mathcal{M}}}
\newcommand{\calB}{{\mathcal{B}}}

\newcommand{\W}{{\mathcal{W}}}

\newcommand{\m}{{\mathcal{m}}}

\newcommand {\mac}[1] { \mathbb{#1} }

\newcommand{\bC}{\Bbb C}

\newtheorem{rem}[theorem]{Remark}
\newtheorem{dfn}[theorem]{Definition}
\theoremstyle{definition}
\newtheorem{ex}[theorem]{Example}
\numberwithin{equation}{section}

\newcommand{\Pro}{\mathbb{P}}
\newcommand\F{\mathcal{F}}
\newcommand\E{\mathbb{E}}
\newcommand\e{\varepsilon}
\def\H{\mathcal{H}}
\def\t{\tau}

\title[Weighted inequalities]{Weighted norm inequalities  for fractional maximal operators--a Bellman function approach}

\author{Rodrigo Ba\~nuelos}\thanks{R. Ba\~nuelos is supported in part  by NSF Grant
\# 0603701-DMS}
\address{Department of Mathematics, Purdue University, West Lafayette, IN 47907, USA}
\email{banuelos@math.purdue.edu}
\author{Adam Os\c ekowski}\thanks{A. Os\c ekowski is supported in part by the NCN grant DEC-2012/05/B/ST1/00412.}
\address{Department of Mathematics, Informatics and Mechanics, University of Warsaw, Banacha 2, 02-097 Warsaw, Poland}
\email{ados@mimuw.edu.pl}

\subjclass[2010]{Primary: 42B25. Secondary: 46E30.}
\keywords{Maximal, dyadic, Bellman function, best constants}

\begin{abstract}
We study classical weighted $L^p\to L^q$ inequalities  for the fractional maximal operators on $\R^d$, proved originally by Muckenhoupt and Wheeden in the 70's. 
We establish a slightly stronger version of this inequality with the use of a novel extension of Bellman function method. More precisely, the estimate is deduced from the existence of a certain special function which enjoys appropriate majorization and concavity. From this result and an  explicit version of the ``$A_{p-\varepsilon}$ theorem," derived  also with Bellman functions, we obtain the sharp inequality of Lacey, Moen, P\'erez and Torres. 
\end{abstract}

\maketitle

\section{Introduction}
The motivation for the results of this paper comes from the question about weighted $L^p\to L^q$-norm inequalities  for fractional maximal operators 
 on $\R^d$, with the constants of optimal order. To introduce the necessary background and notation, let $0\leq \alpha<d$ be a fixed constant. The fractional maximal operator $\mathcal{M}^\alpha$ is given by
$$ \mathcal{M}^\alpha\varphi(x)=\sup\left\{|Q|^{\frac{\alpha}{d}-1}\int_Q |\varphi(u)|\mbox{d}u:Q\subset \R^d\mbox{ is a cube containing }x\right\},$$
where $\varphi$ is a locally integrable function on $\R^d$, $|Q|$ denotes the Lebesgue measure of $Q$ and the cubes we consider above have  sides parallel to the axes. In particular, if we put $\alpha=0$, then $\mathcal{M}^\alpha$ reduces to the classical Hardy-Littlewood maximal operator.  
The above fractional operators play an important role in analysis and PDEs, and form a convenient tool to study differentiability properties of functions. In particular, it is often of interest to obtain optimal, or at least good bounds for norms of these operators. 
We will be mostly interested in the weighted setting. In what follows, the  word ``weight'' refers to a locally integrable, positive function  on $\R^d$, which will usually be denoted by $w$. Given $p\in (1,\infty)$, we say that $w$ belongs to the Muckenhoupt $A_p$ class (or, in short, that $w$ is an $A_p$ weight), if the $A_p$ characteristics $[w]_{A_p}$, given by
$$ [w]_{A_p}:=\sup_Q \left(\frac{1}{|Q|}\int_Q w \right)\left(\frac{1}{|Q|}\int_Q w^{-1/(p-1)}\right)^{p-1},$$
is finite. 
One can also define the appropriate versions of this condition for $p=1$ and $p=\infty$, by passing above with $p$ to the appropriate limit (see e.g. \cite{Gr}, \cite{Hr}). However, we omit the details, as in this paper we will be mainly concerned with the case $1<p<\infty$.

As shown  by Muckenhoupt \cite{M}, the $A_p$ condition arises naturally during the study of weighted $L^p$ bounds for the Hardy-Littlewood maximal operator. To be more precise, for a given $1<p<\infty$, the inequality 
$$ ||\mathcal{M}^0\varphi||_{L^p(w)}\leq C_{p,d,w}||\varphi||_{L^p(w)}$$
holds true with some constant $C_{p,d,w}$ depending only on the parameters indicated, if and only if $w$ is an $A_p$ weight. Here, of course,
$ ||\varphi||_{L^p(w)}=\left(\int_{\R^d} |\varphi|^pw\mbox{d}u\right)^{1/p}$ is the usual norm in the weighted $L^p$ space. This result was extended to the fractional setting by Muckenhoupt and Wheeden \cite{MW}. Let $p,\,q$ be positive exponents satisfying the relation $\frac{1}{q}=\frac{1}{p}-\frac{\alpha}{d}$. Then the inequality
$$ \left(\int_{\R^d} \big(\mathcal{M}^\alpha \varphi(x)\big)^qw(x)^q \mbox{d}x\right)^{1/q}\leq C_{p,\alpha,d,w}\left(\int_{\R^d} |\varphi(x)|^pw(x)^p\mbox{d}x\right)^{1/p}$$
if and only if 
$$ \sup_Q \left(\frac{1}{|Q|}\int_Q w^q\right)\left(\frac{1}{|Q|}\int_Q w^{-p'}\right)^{q/p'}<\infty,$$
where $p'=p/(p-1)$ is the harmonic conjugate to $p$. In other words, passing to $w^q$, we see that 
$$ ||\mathcal{M}^\alpha||_{L^p(w^{p/q})\to L^q(w)}\leq C_{p,\alpha,d,w}$$
 if and only if $w\in A_{q/p'+1}.$ 
 
Actually, one can choose the above constants $C_{p,d,w}$ and $C_{p,\alpha,d,w}$ so that the dependence on $w$ is through the appropriate characteristics of the weight only. Then there arises a very interesting question, concerning the sharp description of this dependence. The first result in this direction, going back to early 90's, is that of Buckley \cite{Bu}. Specifically, he proved that Hardy-Littlewood operator satisfies
\begin{equation}\label{Buckley}
 ||\mathcal{M}^0\varphi||_{L^p(w)}\leq C_{p,d}[w]_{A_p}^{1/(p-1)}||\varphi||_{L^p(w)}
\end{equation}
and showed that the power $1/(p-1)$ cannot be decreased in general. 
By now, there are several different proofs of this inequality (which produce various upper bounds for the involved constant $C$). For instance, we refer the interested reader to works of Coifman and Fefferman \cite{CF}, Lerner \cite{Le}, Nazarov and Treil \cite{NT} and, for a slightly stronger statement, to the recent paper of Hyt\"onen and Perez \cite{HP}. In the fractional setting, Lacey et al. \cite{L} proved that
\begin{equation}\label{Lacey}
 ||\mathcal{M}^\alpha\varphi||_{L^q(w)}\leq C_{p,\alpha, d}[w]_{A_{q/p'+1}}^{(1-\alpha/d)p'/q}||\varphi||_{L^p(w^{p/q})},
\end{equation}
and the exponent $(1-\alpha/d)p'/q$ cannot be improved. 
See also the recent paper of Cruz-Uribe and Moen \cite{CM} for certain generalizations of the above result.

The purpose of this paper is to provide yet another extensions of \eqref{Buckley} and \eqref{Lacey}. Here is the precise statement.

\begin{theorem}\label{mainthm}
Suppose that $0\leq \alpha<d$ is fixed and let $p$, $q$ be exponents satisfying $\frac{1}{q}=\frac{1}{p}-\frac{\alpha}{d}$. 
Then for any $A_{q/p'+1}$ weight $w$ and any locally integrable function $\varphi$ on $\R^d$ we have
\begin{equation}\label{mainin0}
 ||\mathcal{M}^\alpha\varphi||_{L^q(w)}\leq C^d\inf_{1< r<q/p'+1}\left\{[w]_{A_r}^{1/(q-s)}\left(\frac{q}{s}\right)^{1/(q-s)}\right\}||\varphi||_{L^p(w^{p/q})},
\end{equation}
where $C$ is an absolute constant (for instance, $C=12$ works fine) and
\begin{equation}\label{defs}
 s=s(r,p,\alpha)=q-\frac{rq^2}{(r-1)(q-p)+pq}.
\end{equation}
\end{theorem}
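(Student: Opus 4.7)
The plan is to prove (1.5) in three steps: a dyadic reduction, a Bellman-function estimate for the dyadic fractional maximal operator at each admissible $r$, and a final optimization over $r$.

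First, by the standard shifted-dyadic (``one-third'') trick, the continuous operator $\mathcal{M}^\alpha$ is pointwise dominated by $C^d$ times a sum of finitely many dyadic fractional maximal operators $\mathcal{M}^{\alpha,\mathcal{D}}$ associated with translated dyadic lattices. This reduction is responsible for the $C^d$ factor in (1.5) and reduces matters to a purely dyadic inequality for each weight class.

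Next, for fixed $r\in(1,q/p'+1)$ and a dyadic cube $Q_0$, I would build a Bellman function $\mathbb{B}$ of the averages
\[
X=\langle|\varphi|^p w^{p/q}\rangle_Q,\quad Y=\langle w\rangle_Q,\quad Z=\langle w^{-1/(r-1)}\rangle_Q^{r-1},\quad M,
\]
where $M$ records the running dyadic fractional average $\sup_{Q'\supset Q}|Q'|^{\alpha/d-1}\int_{Q'}|\varphi|$, defined on the non-convex domain cut out by the $A_r$ constraint $YZ\leq[w]_{A_r}$. The two requirements are a \emph{majorization} $\mathbb{B}\geq M^q Y$, ensuring that $\mathbb{B}$ dominates the $L^q(w)$-integrand, and a \emph{concavity/splitting} inequality: on each dyadic refinement $Q=Q_1\sqcup\cdots\sqcup Q_{2^d}$, the value of $\mathbb{B}$ at the parent averages is at least the mean of $\mathbb{B}$ at the children, after the non-smooth upward update of the variable $M$. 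A natural ansatz is the homogeneous expression
\[
\mathbb{B}(X,Y,Z,M)=M^q Y + c_{p,q,r}\left(\tfrac{q}{s}\right)^{q/(q-s)}[w]_{A_r}^{q/(q-s)}\,X^{q/p}Y^{1-q/p},
\]
whose exponents are dictated by the scaling symmetries of the problem and in which the value of $s$ given by (1.6) is precisely what makes the Hessian test and the majorization compatible. Telescoping this splitting bound along the dyadic tree then yields the dyadic analogue of (1.5) with constant $[w]_{A_r}^{1/(q-s)}(q/s)^{1/(q-s)}$.

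Since $r$ was arbitrary in the admissible range, taking the infimum and combining with the dyadic reduction completes the proof of (1.5). The main obstacle is the Bellman-function construction: $\mathbb{B}$ must be concave (in the right direction) on the non-convex $A_r$-constrained region while accommodating the upward-jumping behavior of $M$ under refinement, and the sharp exponents $1/(q-s)$ and $q/s$ must emerge as the unique choices making the splitting and majorization conditions simultaneously valid. The dyadic-to-continuous transfer and the subsequent optimization over $r$ are then essentially routine.
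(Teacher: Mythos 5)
Your high-level plan (dyadic reduction, then a Bellman-function argument at each admissible $r$, then optimizing over $r$) matches the paper's, but the concrete Bellman ansatz you propose is wrong in sign and in its choice of data variable, and it misses what the paper identifies as the central difficulty — and its resolution is the one genuine novelty of the argument.

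The obstruction the paper emphasizes is that the target inequality mixes an $L^q(w)$ norm with an $L^p(w^{p/q})$ norm, so no single pointwise inequality integrates up to the right-hand side. The paper handles this by allowing the Bellman function to depend explicitly on the fixed scalar $\|\varphi\|_{L^p(\mathfrak{Q};w^{p/q})}$: with variables $x=\langle\varphi\rangle_Q$, $y$ the running fractional maximum, $w=\langle w\rangle_Q$, $v=\langle w^{-1/(r-1)}\rangle_Q$, one sets
\[
B(x,y,w,v)=y^q w-\tfrac{q}{s}\,\|\varphi\|_{L^p(\mathfrak{Q};w^{p/q})}^{\,q-s-t}\,c\,x^{t}y^{s}v^{1-t},
\]
which is $\le 0$ at the initial state $y=|\mathfrak{Q}|^{\alpha/d}x$, is bounded below by a multiple of $y^q w - C\|\varphi\|^{q(q-s-t)/(q-s)}(xw^{1/q})^p$, and decreases in the mean under dyadic refinement because of the \emph{convexity} of $(x,v)\mapsto x^t v^{1-t}$ entering with a \emph{minus} sign. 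Your ansatz $\mathbb{B}=M^q Y + c[\,\cdot\,]X^{q/p}Y^{1-q/p}$ fails on three counts. First, the sign: $X^{q/p}Y^{1-q/p}$ is convex (for $q>p$, $q/p>1$, $1-q/p<0$), so with a plus it makes $\mathbb{B}$ convex rather than concave, and the telescoping runs the wrong way. Second, the initial condition: after telescoping you end up needing $|\mathfrak{Q}|\,\mathbb{B}_0\le C\|\varphi\|_{L^p(w^{p/q})}^q$, but $|\mathfrak{Q}|\,\mathbb{B}_0$ contains both $|\mathfrak{Q}|^{1+\alpha q/d}\langle\varphi\rangle_\mathfrak{Q}^q\langle w\rangle_\mathfrak{Q}$ (not controllable by the $L^p$ data) and $(\int_\mathfrak{Q}|\varphi|^p w^{p/q})^{q/p}(\int_\mathfrak{Q} w)^{1-q/p}$, whose negative power of $\int w$ blows up as that mass decreases; neither term is dominated by $\|\varphi\|_{L^p(w^{p/q})}^q$. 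Third, since your majorization $\mathbb{B}\ge M^q Y$ is built in trivially by the plus sign, there is no genuine interplay between the maximal variable and the data variable, and the Hölder step that, in the paper, relates $\langle\varphi\rangle_Q$, $\langle w^{-1/(r-1)}\rangle_Q$ and $\|\varphi\|_{L^p(w^{p/q})}$ has no analogue. Replacing $\langle\varphi\rangle_Q$ by $\langle|\varphi|^p w^{p/q}\rangle_Q$ is not a cosmetic change: the paper's proof of the monotonicity uses Hölder on $\int_Q\varphi$ against $w^{p/q}$ and $w^{-1/(r-1)}$, which needs the plain average of $\varphi$ as the variable. So the outline is in the right spirit, but the specific Bellman function you write down does not satisfy the needed concavity or initial condition, and the mechanism that makes the paper's argument work — a Bellman function that carries $\|\varphi\|_{L^p(\mathfrak{Q};w^{p/q})}$ as a parameter, so that the final estimate closes via the exponent identity $q(q-s-t)/(q-s)+p=q$ — is absent from your proposal.
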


To see that this result implies \eqref{Buckley} and \eqref{Lacey}
, we will need to establish the following version of the ``$A_{p-\varepsilon}$ theorem" which is sharper than what the classical results give, see Coifman and Fefferman \cite{CF}. 

\begin{theorem}\label{weights}
Suppose that $w$ is an $A_\beta$ weight ($1<\beta<\infty$) and put
\begin{equation}\label{defr}
 r=\frac{\beta(1+2^{\beta d/(\beta-1)}\beta^{\beta/(\beta-1)}[w]_{A_\beta}^{1/(\beta-1)})}{\beta+2^{\beta d/(\beta-1)}\beta^{\beta/(\beta-1)}[w]_{A_\beta}^{1/(\beta-1)}} \in (1,\beta).
\end{equation}
Then $w$ is an $A_r$ weight and $[w]_{A_r}\leq 2^{rd+r}[w]_{A_\beta}^{(r-1)/(\beta-1)}.$
\end{theorem}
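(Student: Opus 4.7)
My plan is to prove Theorem 2 by the classical $A_{p-\varepsilon}$ blueprint: duality, followed by a quantitative reverse Hölder inequality for the dual weight, with the reverse Hölder supplied by a Bellman function argument in the spirit of the rest of the paper.

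\textbf{Setup via duality.} Let $K=[w]_{A_\beta}$ and $\sigma=w^{-1/(\beta-1)}$. A direct rearrangement of the $A_\beta$ characteristic shows that $\sigma\in A_{\beta'}$, where $\beta'=\beta/(\beta-1)$, with $[\sigma]_{A_{\beta'}}=K^{1/(\beta-1)}$; in particular $\sigma\in A_\infty$. Crucially, for any $r\in(1,\beta)$ one has $w^{-1/(r-1)}=\sigma^\tau$ with $\tau=(\beta-1)/(r-1)>1$, so the $A_r$ characteristic of $w$ is
$$\langle w\rangle_Q\,\langle\sigma^\tau\rangle_Q^{r-1},$$
and the task reduces to a higher integrability estimate for $\sigma$.

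\textbf{Core step.} I would establish (via a Bellman function on a domain reflecting the $A_{\beta'}$ constraint $xy^{\beta'-1}\leq K^{1/(\beta-1)}$) a sharp reverse Hölder estimate of the form
$$\left(\frac{1}{|Q|}\int_Q\sigma^\tau\right)^{1/\tau}\leq C_{d,r,K}\cdot\frac{1}{|Q|}\int_Q\sigma$$
for the specific exponent $\tau=1+\beta/A$, with $A=2^{\beta d/(\beta-1)}\beta^{\beta/(\beta-1)}K^{1/(\beta-1)}$ as in the statement. Setting $r-1=(\beta-1)/\tau$ gives $r=\beta(1+A)/(\beta+A)$, matching the formula in the theorem. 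Substituting $w^{-1/(r-1)}=\sigma^\tau$ and applying reverse Hölder yields
$$\langle w\rangle_Q\,\langle\sigma^\tau\rangle_Q^{r-1}\leq C_{d,r,K}^{\tau(r-1)}\,\langle w\rangle_Q\,\langle\sigma\rangle_Q^{\tau(r-1)}=C_{d,r,K}^{\beta-1}\,\langle w\rangle_Q\,\langle\sigma\rangle_Q^{\beta-1}\leq C_{d,r,K}^{\beta-1}K,$$
where the last step uses the $A_\beta$ hypothesis. The constants in $A$ are chosen precisely so that $C_{d,r,K}^{\beta-1}K=2^{rd+r}K^{(r-1)/(\beta-1)}$; this is the bookkeeping that converts a ``linear in $K$'' naive bound into the stated fractional power, by not fully exhausting the $A_\beta$ condition.

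\textbf{Main obstacle.} The real content is producing the sharp reverse Hölder with the precise constants. The factors $2^{\beta d/(\beta-1)}$ and $\beta^{\beta/(\beta-1)}$ in the definition of $A$ are not formal: the first tracks the dyadic doubling in dimension $d$ (the ratio $|Q|/|Q_{\text{child}}|=2^d$ entering the iteration), and the second emerges from optimizing the concavity inequality of the Bellman function against the $A_{\beta'}$ constraint on $\sigma$. A cleaner alternative is to invoke a Hytönen--Pérez-type sharp reverse Hölder for $A_\infty$ weights quoted with explicit constants tuned to $[\sigma]_{A_{\beta'}}=K^{1/(\beta-1)}$, and then verify that the exponent $\tau=1+\beta/A$ lies in the admissible range. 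Once such a reverse Hölder is in place, the passage to the $A_r$ bound is the algebraic manipulation carried out above.
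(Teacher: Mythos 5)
Your outline is a recognizable plan — the classical route to the $A_{p-\varepsilon}$ theorem via the dual weight $\sigma=w^{-1/(\beta-1)}\in A_{\beta'}$ and a quantitative reverse H\"older inequality for $\sigma$ at exponent $\tau=(\beta-1)/(r-1)$. The paper does something related but organized differently: it applies Vasyunin's Bellman function $B$ directly to the pair $(w,w^{-1/(\beta-1)})$ on the domain $\Omega_{2^{\beta d}c}$, using the boundary value \eqref{bound0} (which equals $\sigma^\tau$ on $wv^{\beta-1}=1$) and the majorization \eqref{bound} in terms of $w^{-1/(r-1)}$, so that the telescoping argument in Lemma~\ref{aux2} lands directly on the $A_r$ bound without extracting a stand-alone reverse H\"older estimate and then re-inserting the $A_\beta$ hypothesis.

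The genuine gap is that the substantive content of the theorem is never produced. You write ``I would establish (via a Bellman function\dots) a sharp reverse H\"older estimate,'' and later ``a cleaner alternative is to invoke a Hyt\"onen--P\'erez-type sharp reverse H\"older\dots quoted with explicit constants,'' but neither is carried out, and that is where essentially all the difficulty sits. To make your plan rigorous one needs, at minimum: (a) an explicit Bellman function and an explicit value of its majorization constant for the chosen $\tau$ — in the paper this is Lemma~\ref{Vasyunin}, imported from Vasyunin with the exact constant $C_{\beta,d,r,c}$; (b) a mechanism to pass from concavity along lines to $2^d$-fold dyadic averages in $\R^d$ — this is Lemma~\ref{geomlemma}, and it is precisely the step that injects the dimensional factor $2^{\beta d}$ (hence the $2^{\beta d/(\beta-1)}$ inside your $A$), which you attribute informally to ``dyadic doubling'' without a proof; and (c) quantitative control of the auxiliary parameter governing Vasyunin's constant — the paper's two-sided bound \eqref{double} for $s$, which is what drives the estimate $C_{\beta,d,r,c}^{r-1}\le 2^{rd+r}[w]_{A_\beta}^{(r-1)/(\beta-1)}$. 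Your closing ``bookkeeping'' sentence, asserting that the constants in $A$ were ``chosen precisely so that'' $C_{d,r,K}^{\beta-1}K$ collapses to $2^{rd+r}K^{(r-1)/(\beta-1)}$, is exactly the claim that must be proved; without an explicit candidate for $C_{d,r,K}$ there is nothing to verify. As written, the proposal is a plausible strategy sketch, not a proof of the theorem.
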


Now \eqref{Buckley} and \eqref{Lacey} 
 follow easily: apply Theorem \ref{weights} with $\beta=q/p'+1$ and plug $r$ given by \eqref{defr} into \eqref{mainin0}. We have
$$ \frac{q}{s}=\frac{(r-1)\left(\frac{q}{p}-1\right)+q}{\beta-r}$$
and
$$ \beta-r=\frac{\beta(\beta-1)}{\beta+2^{\beta d/(\beta-1)}\beta^{\beta/(\beta-1)}[w]_{A_\beta}^{1/(\beta-1)}},$$
so $q/s\leq c_{p,\alpha,d}[w]_{A_\beta}^{1/(\beta-1)}$. Therefore,
\begin{align*}
 [w]_{A_r}^{1/(q-s)}\left(\frac{q}{s}\right)^{1/(q-s)}&\leq 2^{(rd+r)/(q-s)}[w]_{A_\beta}^{(r-1)/(\beta-1)(q-s)}\cdot c_{p,\alpha,d}^{1/(q-s)}[w]_{A_\beta}^{1/(\beta-1)(q-s)}\\
 & \leq C_{p,\alpha,d}[w]_{A_\beta}^{r/(\beta-1)(q-s)},
\end{align*}
and it remains to note that
$$ \frac{r}{(\beta-1)(q-s)}=\frac{(r-1)(q-p)+pq}{(\beta-1)q^2}\leq \frac{(\beta-1)(q-p)+pq}{(\beta-1)q^2}=\left(1-\frac{\alpha}{d}\right)\frac{p'}{q}.$$
A few words about our approach are in order. Actually, we will work with the dyadic version of the fractional maximal operator, given by 
$$ \mathcal{M}_d^\alpha\varphi(x)=\sup\left\{|Q|^{\frac{\alpha}{d}-1}\int_Q |\varphi(u)|\mbox{d}u:Q\subset \R^d\mbox{ is a dyadic cube containing } x\right\}.$$
Here the dyadic cubes are those formed by the grids $2^{-N}\mathbb{Z}^d$, $N\in\mathbb{Z}$. By a standard comparison of the sizes of $\mathcal{M}^\alpha\varphi$ and $\mathcal{M}^\alpha_d\varphi$ (see e.g. Grafakos \cite{Gr} or Stein \cite{St}), we will be done if we establish the following version of Theorem \ref{mainthm}.

\begin{theorem}\label{mainth}
Suppose that $0\leq \alpha<d$ is fixed and let $p$, $q$ be exponents satisfying $\frac{1}{q}=\frac{1}{p}-\frac{\alpha}{d}$. 
Then for any $A_{q/p'+1}$ weight $w$ and any locally integrable function $\varphi$ on $\R^d$ we have
\begin{equation}\label{mainin}
 ||\mathcal{M}^\alpha_d\varphi||_{L^q(w)}\leq \inf_{1< r<q/p'+1}\left\{[w]_{A_r}^{1/(q-s)}\left(\frac{q}{s}\right)^{1/(q-s)}\right\}||\varphi||_{L^p(w^{p/q})}.
\end{equation}
\end{theorem}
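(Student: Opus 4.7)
The plan is a Bellman-function argument on the dyadic lattice. I fix $r\in(1,q/p'+1)$ and aim to prove \eqref{mainin} with the fixed-$r$ constant $K(r):=[w]_{A_r}^{1/(q-s)}(q/s)^{1/(q-s)}$, the infimum then following pointwise. Write $\sigma:=w^{-1/(r-1)}$, so the $A_r$ condition reads $\langle w\rangle_Q\langle\sigma\rangle_Q^{r-1}\le[w]_{A_r}$ on every dyadic cube $Q$. A standard truncation and monotone convergence reduce the claim to bounded $\varphi\ge 0$ supported in some dyadic cube $Q_0$; the bound on $Q_0$ is then extended to $\R^d$ by letting $Q_0\nearrow\R^d$. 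The target becomes
$$\int_{Q_0}(\mathcal{M}_d^\alpha \varphi)^q w\,dx \ \le\ K(r)^q\Bigl(\int_{Q_0}\varphi^p w^{p/q}\Bigr)^{q/p}.$$

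The heart of the proof is the construction of a special ``Bellman'' function encoding the inequality along the dyadic tree. Attach to each dyadic cube $Q\subset Q_0$ the vector of averages
$$X(Q)=(x,y,z,f,t):=\Bigl(\langle\varphi\rangle_Q,\ \langle w\rangle_Q,\ \langle\sigma\rangle_Q,\ \langle\varphi^p w^{p/q}\rangle_Q,\ |Q|^{\alpha/d-1}\textstyle\int_Q\varphi\Bigr),$$
where the coordinate $t$ is precisely the ``fractional average over $Q$'' that $\mathcal{M}_d^\alpha\varphi$ supremizes. On the natural domain $\Omega$ cut out by H\"older-type constraints together with $yz^{r-1}\le[w]_{A_r}$, I look for $B\colon\Omega\to[0,\infty)$ satisfying: (i) a \emph{majorization} $B(X)\ge y\,t^q$; (ii) a \emph{size bound} $B(X)\le K(r)^q\,f^{q/p}$; and (iii) an \emph{iteration inequality} stating that if $Q$ is refined into its $2^d$ dyadic children $\{Q_j\}$ then
$$B(X(Q))\ \ge\ 2^{-d}\sum_j B(X(Q_j))+\Delta(Q),$$
where $\Delta(Q)\ge 0$ is designed so that $\sum_{Q\subseteq Q_0}|Q|\,\Delta(Q)\ge\int_{Q_0}(\mathcal{M}_d^\alpha\varphi)^q w$. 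Telescoping (iii) across the subtree, invoking (i) on leaves, and finally (ii) at $Q_0$ produces the desired $L^q(w)$ bound with constant $K(r)$; taking the infimum over $r$ yields \eqref{mainin}.

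The principal obstacle will be the construction of $B$ and the verification of (iii). The classical case $\alpha=0$, $p=q$ is the Hardy-Littlewood weighted inequality \eqref{Buckley}, whose Bellman function is essentially due to Nazarov-Treil; however, for $\alpha>0$ the coordinate $t$ does not average under refinement --- one has $t(Q_j)=|Q_j|^{\alpha/d-1}\int_{Q_j}\varphi$, which is $2^{d-\alpha}$ times the corresponding contribution to $t(Q)$ rather than its mean --- so a direct lift of the existing function fails. I anticipate the correct ansatz will be of homogeneous form
$$B(x,y,z,f,t)\ =\ K(r)^q\,f^{q/p}\ -\ c\,y\,t^{q}\,\Phi\!\bigl(yz^{r-1}\bigr),$$
with $\Phi$ a scalar function on $(0,[w]_{A_r}]$; requiring (iii) to be sharp on extremizers should yield a first-order ODE for $\Phi$ whose balance of exponents reproduces precisely the value $s=s(r,p,\alpha)$ of \eqref{defs} together with the sharp constant $(q/s)^{1/(q-s)}[w]_{A_r}^{1/(q-s)}$. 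This is where the ``novel extension of the Bellman function method'' advertised in the abstract will enter. Once $B$ is in hand, the verification of (i)--(ii), the telescoping in (iii), and the limit $Q_0\nearrow\R^d$ are essentially formal.
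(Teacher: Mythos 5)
Your high-level plan (dyadic lattice, Bellman function on the tree, iteration + telescoping, take the infimum over $r$) is the right family of ideas, but the specific architecture you propose cannot produce \eqref{mainin} when $\alpha>0$, and the breakdown occurs exactly at the steps you label ``essentially formal.'' You take a $5$-dimensional domain whose last coordinate is $f=\langle\varphi^p w^{p/q}\rangle_Q$ and ask for a size bound $B(X)\leq K(r)^q f^{q/p}$ together with an iteration inequality that, after telescoping down the tree, yields $|Q_0|\,B(X(Q_0))\geq\int_{Q_0}(\mathcal{M}^\alpha_d\varphi)^q w$ (plus nonnegative leaf contributions). Combining these gives $\int_{Q_0}(\mathcal{M}^\alpha_d\varphi)^q w\leq K(r)^q |Q_0|\,f(Q_0)^{q/p}=K(r)^q\,|Q_0|^{1-q/p}\bigl(\int_{Q_0}\varphi^p w^{p/q}\bigr)^{q/p}$, and since $q>p$ the spurious factor $|Q_0|^{1-q/p}$ tends to $0$ as $Q_0\nearrow\R^d$ while the left side is nondecreasing. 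This is a contradiction, so no function $B$ with properties (i)--(iii) can exist; the obstruction is not in finding the right $\Phi$ in your ansatz but in the architecture itself. The paper flags precisely this: the target is $\|\mathcal{M}^\alpha_d\varphi\|_{L^q(w)}^q\leq C\,\|\varphi\|_{L^p(w^{p/q})}^q$, and because the right-hand side carries a power $q/p>1$ of an average, it ``is not of integral form'' and cannot be reached by a pointwise majorization $B\leq K^q f^{q/p}$ applied at the root.

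The paper's actual device, which is where the advertised novelty lives, is structurally different from your (i)--(iii): there is no $f$-coordinate at all. The Bellman function lives on a $4$-dimensional domain in the variables $(x,y,w,v)$ corresponding to $(\varphi_n,\psi_n,\mathtt{w}_n,\mathtt{v}_n)$ where $\psi_n$ is the \emph{running fractional maximum} (not the single fractional average $t$), and $B$ is allowed to depend on $\varphi$ globally through the scalar $\|\varphi\|_{L^p(\mathfrak{Q};w^{p/q})}$, which appears as a constant coefficient:
$B(x,y,w,v)=y^qw-\tfrac{q}{s}\|\varphi\|_{L^p(\mathfrak{Q};w^{p/q})}^{q-s-t}\,c\,x^ty^sv^{1-t}$.
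The argument then has three parts with a different logical shape than yours: (a) the sequence $n\mapsto\int_\mathfrak{Q}B(\varphi_n,\psi_n,\mathtt{w}_n,\mathtt{v}_n)$ is nonincreasing (using $\partial_yB\le0$ on the relevant range, convexity of $(x,v)\mapsto x^tv^{1-t}$, and the $A_r$ constraint $wv^{r-1}\le c$ only as a domain restriction); (b) at $n=0$ one has $\psi_0=|\mathfrak{Q}|^{\alpha/d}\varphi_0$ and hence $\int_\mathfrak{Q}B\le0$; and (c) a pointwise lower bound of \emph{integral} form, $B(x,y,w,v)\geq\tfrac{q-s}{q}\bigl[y^qw-C\,x^pw^{p/q}\bigr]$ with $C$ explicit in $\|\varphi\|$ and $c$, which after integration and passage to the limit closes the estimate. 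Your ansatz cannot be repaired into this because it couples $w$ and $v$ only through $yz^{r-1}$ and keeps $f^{q/p}$ as the dominant term; the paper's $B$ must instead carry the mixed monomial $x^ty^sv^{1-t}$ and the external normalization by $\|\varphi\|$ so that the closing step lands on an integral inequality rather than on a mixed-norm inequality.
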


\newcommand\ddedd{Theorem  \ref{mainth} holds in the general setting of martingales under mild regularity assumptions on the filtration.   Indeed, martingale weighted norm inequalities have been extensively studied by many authors since the 70's. In particular, a  version of Theorem \ref{mainth} for martingales was proved  by Izumisawa and Kazamaki in  \cite{IK}.  Their proof, described at the end of this paper, is extremely simple (nothing more than Doob's inequality) and already gives the same bound as Theorem \ref{mainth}.  Since Theorem \ref{mainth} can be obtained from the martingale inequality for dyadic martingales, so then can Theorem \ref{mainthm}. However,  an interesting feature of our approach here is that we will \emph{not} exploit Calder\'on-Zygmund-type decomposition/stopping time arguments, covering lemmas, Carlesson embedding theorem, Doob's inequality, etc., which are the typical tools in the proofs of such results. Instead, the statement will be established directly with the use of the so-called Bellman function method which, in the setting of these type of harmonic analysis problems, has its roots in the work of Burkholder \cite{Bur1} on sharp martingale inequalities. More precisely, Theorem \ref{mainth} and Theorem \ref{weights}  will be deduced from the existence of special functions, possessing certain majorization and concavity properties.  These type of techniques have been widely used in recent years to obtain optimal bounds for various operators in analysis (see \cite{Hyt, SlaVas} for some of this literature).  For the probabilistic point of view and its many applications to martingale inequalities  we refer to the second author's monograph \cite{Ose1} and the many references to applications provided therein.  }

The paper is organized as follows. In the next section we establish Theorem \ref{weights}. Section~3 is devoted to the proof of Theorem \ref{mainth}. Both these statements are shown with the use of the so-called Bellman function method, a powerful tool exploited widely in analysis and probability, and its certain enhancement. We end with some remarks concerning martingale versions of the above weighted norm inequalities.

\section{Proof of Theorem \ref{weights}}
Throughout this section, $\beta\in (1,\infty)$, $d\geq 1$ and $c\geq 1$ are fixed parameters. 
Let us define the hyperbolic domain
$$ \Omega_c=\{(w,v)\in \bR_{+} \times \bR_{+}: 1\leq wv^{\beta-1}\leq c\}.$$
We start with the following geometric fact.
\begin{lemma}\label{geomlemma}
Let $\alpha\in [2^{-d},1-2^{-d}]$ and suppose that points $\mathtt{P}$, $\mathtt{Q}$ and $\mathtt{R}=\alpha \mathtt{P}+(1-\alpha)\mathtt{Q}$ lie in $\Omega_c$. Then the whole line segment $\mathtt{P}\mathtt{Q}$ is contained within $\Omega_{2^{\beta d}c}$.
\end{lemma}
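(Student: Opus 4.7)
The key observation I would make is that while the function $g(w,v):=wv^{\beta-1}$ is neither convex nor concave on $\R_+\times\R_+$ (its Hessian has negative determinant, yielding a saddle), its $\beta$-th root
$$ h(w,v):=g(w,v)^{1/\beta}=w^{1/\beta}\,v^{(\beta-1)/\beta}$$
\emph{is} concave, being a weighted geometric mean of $w$ and $v$ with weights $1/\beta$ and $(\beta-1)/\beta$ summing to one (a direct Hessian computation shows it is negative semidefinite, with zero determinant). Rewriting the domains through $h$, we have $\Omega_c=\{(w,v):1\le h(w,v)\le c^{1/\beta}\}$ and $\Omega_{2^{\beta d}c}=\{(w,v):1\le h(w,v)\le 2^d c^{1/\beta}\}$, so the problem reduces to controlling $h$ along the segment $\mathtt{PQ}$.

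The lower bound $h(\mathtt{S})\ge 1$ is immediate: writing $\mathtt{S}=(1-t)\mathtt{P}+t\mathtt{Q}$ for some $t\in[0,1]$, concavity of $h$ gives $h(\mathtt{S})\ge(1-t)h(\mathtt{P})+th(\mathtt{Q})\ge 1$, as both endpoints satisfy $h\ge 1$.

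For the upper bound I would split according to which sub-segment contains $\mathtt{S}$. Say $\mathtt{S}\in\mathtt{PR}$, so $\mathtt{S}=s\mathtt{P}+(1-s)\mathtt{Q}$ with $s\in[\alpha,1]$. A short algebraic rearrangement re-expresses $\mathtt{R}$ as the convex combination $\mathtt{R}=(\alpha/s)\mathtt{S}+(1-\alpha/s)\mathtt{Q}$. Applying concavity of $h$, together with $h(\mathtt{R})\le c^{1/\beta}$ and $h(\mathtt{Q})\ge 1$, yields
$$ c^{1/\beta}\ge h(\mathtt{R})\ge (\alpha/s)h(\mathtt{S})+(1-\alpha/s),$$
which rearranges to $h(\mathtt{S})\le (s/\alpha)(c^{1/\beta}-1)+1\le 2^{d} c^{1/\beta}$, using $s\le 1$ and $\alpha\ge 2^{-d}$. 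Raising to the $\beta$-th power gives $g(\mathtt{S})\le 2^{\beta d}c$. The case $\mathtt{S}\in\mathtt{RQ}$ is handled symmetrically, using $1-\alpha\ge 2^{-d}$.

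The main obstacle is identifying the correct concavifying power. A naive attempt based on concavity of $\log g=\log w+(\beta-1)\log v$ only yields an estimate of the form $g(\mathtt{S})\le c^{2^d}$, which is exponential in $c$ and far too weak. The right choice is the $\beta$-th root, which turns $g$ into a weighted geometric mean; the factor $2^{\beta d}$ in the conclusion then arises naturally from undoing this root at the end.
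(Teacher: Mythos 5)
Your proof is correct, and it takes a genuinely different route from the paper's. The paper first reduces, by a ``simple geometrical argument,'' to the configuration where $\mathtt{P}$ and $\mathtt{R}$ lie on the upper hyperbola $wv^{\beta-1}=c$ and $\mathtt{Q}$ lies on the lower one $wv^{\beta-1}=1$; it then disposes of $\mathtt{RQ}$ by convexity of the super-level set $\{wv^{\beta-1}\ge c\}$, and handles $\mathtt{PR}$ by a quadrant argument: from $\mathtt{P}=\alpha^{-1}\mathtt{R}-(\alpha^{-1}-1)\mathtt{Q}$ and $\alpha\ge 2^{-d}$ it deduces $\mathtt{P}_y< 2^d\mathtt{R}_y$ (or $\mathtt{P}_x< 2^d\mathtt{R}_x$ in the other case), traps the segment in a rectangle with corner $(\mathtt{R}_x,2^d\mathtt{R}_y)$, and compares against the hyperbola through that corner. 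Your argument instead exploits the concavity of $h=(wv^{\beta-1})^{1/\beta}$ directly: the lower bound on the segment is immediate, and for the upper bound you rewrite $\mathtt{R}$ as a convex combination of $\mathtt{S}$ and the far endpoint, then rearrange $c^{1/\beta}\ge h(\mathtt{R})\ge(\alpha/s)h(\mathtt{S})+(1-\alpha/s)$. This is slicker: it replaces the reduction-to-boundary step and the two-case quadrant analysis with one clean concavity estimate, works for an arbitrary point $\mathtt{S}$ of the segment without normalization, and makes transparent exactly where the factor $2^d$ (and hence $2^{\beta d}$ after undoing the $\beta$-th root) comes from. Both routes yield the same constant. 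One may note that the paper's geometric reduction is somewhat terse and implicitly uses the same convexity fact you make explicit, so your write-up arguably fills in more detail with less effort.
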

\begin{proof}
Using a simple geometrical argument, it is enough to consider the case when the points  $\mathtt{P}$ and $\mathtt{R}$ lie on the curve $wv^{\beta-1}=c$ (the upper boundary of $\Omega_c$) and $\mathtt{Q}$ lies on the curve $wv^{\beta-1}=1$ (the lower boundary of $\Omega_c$). Then the line segment $\mathtt{R}\mathtt{Q}$ is contained within $\Omega_c$, and hence also within $\Omega_{2^{\beta d}c}$, so  it is enough to ensure that the segment $\mathtt{P}\mathtt{R}$ is contained in $\Omega_{2^{\beta d}c}$. Let $\mathtt{P}=(\mathtt{P}_x,\mathtt{P}_y)$, $\mathtt{Q}=(\mathtt{Q}_x,\mathtt{Q}_y)$ and $\mathtt{R}=(\mathtt{R}_x,\mathtt{R}_y)$. We consider two cases. If $\mathtt{P}_x<\mathtt{R}_x$, then 
$$ \mathtt{P}_y=\alpha^{-1}\mathtt{R}_y-(\alpha^{-1}-1)\mathtt{Q}_y<2^d\mathtt{R}_y,$$
so the segment $\mathtt{P}\mathtt{R}$ is contained in the quadrant $\{(x,y):x\leq \mathtt{R}_x,\,y\leq 2^d\mathtt{R}_y\}$. Consequently, $\mathtt{P}\mathtt{R}$ lies below the hyperbola $xy^{\beta-1}=\gamma$ passing through $(\mathtt{R}_x,2^d\mathtt{R}_y)$,  that is,  corresponding to $\gamma=2^{d(\beta-1)}R_xR_y^{\beta-1}\leq 2^{\beta d}c$. This proves the assertion in the case $\mathtt{P}_x<\mathtt{R}_x$. In the case $\mathtt{P}_x\geq\mathtt{R}_x$ the reasoning is similar. Indeed, we check easily that the line segment $\mathtt{P}\mathtt{R}$ lies below the hyperbola $xy^{\beta-1}=\gamma'$ passing through $(2^d\mathtt{R}_x,\mathtt{R}_y)$.  That is,  the one with $\gamma'=2^dR_xR_y^{\beta-1}\leq 2^{\beta d}c$.
\end{proof}

The proof of Theorem \ref{weights} will rest on a Bellman function which was invented by Vasyunin in \cite{V} during the study of power estimates for $A_\beta$ weights on the real line. To recall this object, we need some more notation. Let $s$ be the unique negative number satisfying the equation
$$ (1-s)(1-s/\beta)^{-\beta}=2^{-\beta d}/c$$
(the existence and uniqueness of $s$ is clear: the function $F(u)=(1-u)(1-u/\beta)^{-\beta}$ is strictly increasing on $(-\infty,0]$ and satisfies $\lim_{u\to -\infty}F(u)=0$, $F(0)=1$). For $r\in \big(\beta(1-s)/(\beta-s),\beta\big)$, define
$$ C=C_{\beta,d,r,c}=(2^{\beta d}c)^{r/(\beta(r-1))}(1-s)^{(\beta-r)/(\beta(1-r))}\left[1+\frac{\beta-r}{\beta(r-1)}s\right]^{-1}.$$
This is precisely the constant $C_{\max}(\beta,-(r-1)^{-1},2^{\beta d}c)$ appearing on p. 50 in \cite{V}. A function defined on a subset of $\R^2$ is said to be locally concave if it is concave along any line segment contained in its domain.

We will need the following lemma from \cite{V}.

\begin{lemma}\label{Vasyunin}
There is a locally concave function $B$ on $\Omega_{2^{pd}c}$ which satisfies 
\begin{equation}\label{bound0}
B(w,v)=w^{-1/(r-1)}\qquad \mbox{ if }wv^{\beta-1}=1
\end{equation}
and such that
\begin{equation}\label{bound}
 B(w,v)\leq C_{\beta,d,r,c}w^{-1/(r-1)} \qquad \mbox{for all }(w,v)\in \Omega_{2^{\beta d}c}.
\end{equation}
\end{lemma}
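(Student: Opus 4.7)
The plan is to construct $B$ explicitly by reducing the two-dimensional local concavity problem on the hyperbolic strip to a one-dimensional ODE via a self-similar ansatz, following Vasyunin \cite{V}. Motivated by the homogeneity of the boundary condition \eqref{bound0}, I will search for $B$ in the form
\[
B(w,v) = w^{-1/(r-1)}\,\phi\bigl(wv^{\beta-1}\bigr),
\]
where $\phi$ is a smooth function on $[1,\,2^{\beta d}c]$. Under this ansatz, condition \eqref{bound0} reduces to the scalar equation $\phi(1)=1$, and the target bound \eqref{bound} becomes the pointwise inequality $\phi(t)\le C_{\beta,d,r,c}$ on $[1,\,2^{\beta d}c]$.

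Next, I will compute the Hessian of $B$ in $(w,v)$. By the chain rule, each entry factors as $w^{a-2}$ (with $a=-1/(r-1)$) times a quadratic expression in $\phi,\phi',\phi''$ whose coefficients depend on $t=wv^{\beta-1}$ only. To obtain the \emph{extremal} locally concave majorant, I will impose the degenerate Monge--Amp\`ere condition $\det\operatorname{Hess}(B)=0$ together with the correct sign on the trace; after cancelling the common prefactor, this collapses to a second-order nonlinear ODE in $\phi$ which can be integrated in closed form via a standard parametric substitution. The negative number $s$ defined by $(1-s)(1-s/\beta)^{-\beta}=2^{-\beta d}/c$ will then emerge naturally as the value of the auxiliary parameter at the upper boundary $t=2^{\beta d}c$, and the explicit maximum of the resulting $\phi$ on $[1,\,2^{\beta d}c]$ will coincide with the stated $C_{\beta,d,r,c}$.

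Finally, with the candidate function in hand, I will verify directly that (i) $\phi(1)=1$, so \eqref{bound0} holds; (ii) $\phi(t)\le C_{\beta,d,r,c}$ on the whole interval, giving \eqref{bound}; and (iii) $\operatorname{Hess}(B)$ is negative semidefinite along every line segment contained in $\Omega_{2^{\beta d}c}$. The main obstacle will be (iii): because $\Omega_{2^{\beta d}c}$ is bounded by two hyperbolas and is \emph{not} convex, local concavity must be checked along arbitrary chords lying inside the strip, not merely infinitesimally at each point. Tracking the sign of $\det\operatorname{Hess}(B)$ and of the diagonal entries uniformly across the hyperbolic strip is the technically delicate step, and the hypothesis $r\in\bigl(\beta(1-s)/(\beta-s),\,\beta\bigr)$ is precisely what is needed to prevent singularities or concavity reversals of the ODE solution in the interior of its range. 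This is the calculation Vasyunin carries out in \cite{V}, and my plan is to adapt it directly, accounting for the dimension $d$ through the rescaling $c\mapsto 2^{\beta d}c$ of the hyperbolic domain.
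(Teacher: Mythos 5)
The paper does not prove this lemma: it is stated verbatim as a citation, with the remark that $C_{\beta,d,r,c}$ ``is precisely the constant $C_{\max}(\beta,-(r-1)^{-1},2^{\beta d}c)$ appearing on p.\ 50 in \cite{V}.'' So there is no in-paper proof to compare your argument against; what you are proposing is a reconstruction of Vasyunin's construction, and at the level of strategy your outline is faithful to it. The homogeneity $B(\lambda w,\lambda^{-1/(\beta-1)}v)=\lambda^{-1/(r-1)}B(w,v)$, which your ansatz $B=w^{-1/(r-1)}\phi(wv^{\beta-1})$ encodes, does hold for the extremal Bellman function, and the degenerate Monge--Amp\`ere / parametric-ODE scheme is indeed how Vasyunin produces the explicit answer and the constant $C_{\max}$. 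In that sense you have correctly identified the route.

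The gap is that your proposal is a plan, not an argument: all of the substantive steps are deferred. You never write down the reduced ODE, never exhibit its closed-form (parametric) solution, never show how the number $s$ with $(1-s)(1-s/\beta)^{-\beta}=(2^{\beta d}c)^{-1}$ enters, never verify that the maximum of $\phi$ on $[1,2^{\beta d}c]$ is exactly $C_{\beta,d,r,c}$, and never check that the hypothesis $r\in\bigl(\beta(1-s)/(\beta-s),\beta\bigr)$ keeps the construction nonsingular. As written, every one of these is an appeal to \cite{V}, which is exactly what the paper already does more economically by citing the lemma. One conceptual point is also slightly off: for a $C^2$ function on an open set, concavity along all line segments contained in the domain is \emph{equivalent} to pointwise negative semidefiniteness of the Hessian, so non-convexity of $\Omega_{2^{\beta d}c}$ does not force you to ``check along arbitrary chords'' beyond the infinitesimal test. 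The genuine non-convexity issue is a different one, and the paper handles it separately in Lemma~\ref{geomlemma}: local concavity on $\Omega_{2^{\beta d}c}$ does not by itself give the concavity inequality \eqref{convB} for points of $\Omega_c$, because the chord through a dyadic family of points need not stay inside $\Omega_c$; one must first show (as Lemma~\ref{geomlemma} does) that it stays inside the larger strip $\Omega_{2^{\beta d}c}$ on which $B$ is defined. That geometric step, not chord-by-chord verification of concavity, is what the enlarged domain is for.
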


Combining this with Lemma \ref{geomlemma} and a straightforward induction argument, we see that the above function $B$ has the following property. If $\mathtt{P}_1$, $\mathtt{P}_2$, $\ldots$, $\mathtt{P}_{2^d}$ and $\mathtt{P}=2^{-d}(\mathtt{P}_1+\mathtt{P}_2+\ldots+\mathtt{P}_{2^d})$ lie in $\Omega_c$, then
\begin{equation}\label{convB}
 B(\mathtt{P})\geq 2^{-d}\sum_{k=1}^{2^d}B(\mathtt{P}_k).
\end{equation}

We turn to the proof of Theorem \ref{weights}. Let us first establish two auxiliary facts.

\begin{lemma}
The number $s$ satisfies the double inequality
\begin{equation}\label{double}
-2^{\beta d/(\beta-1)}\beta^{\beta/(\beta-1)}c^{1/(\beta-1)}\leq s\leq -\beta^{\beta/(\beta-1)}c^{1/(\beta-1)}.
\end{equation}
\end{lemma}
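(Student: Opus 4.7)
Write $t=-s>0$; then the defining equation $(1-s)(1-s/\beta)^{-\beta}=2^{-\beta d}/c$ becomes
\[
G(t):=\frac{(1+t/\beta)^\beta}{1+t}=2^{\beta d}c.
\]
Set $t_0=\beta^{\beta/(\beta-1)}c^{1/(\beta-1)}$ and $t_1=2^{\beta d/(\beta-1)}t_0$; note these are precisely the quantities appearing in \eqref{double}, so \eqref{double} is equivalent to $t_0\le t\le t_1$. Since $F(u)=(1-u)(1-u/\beta)^{-\beta}$ is strictly increasing on $(-\infty,0]$, equivalently $G$ is strictly increasing on $[0,\infty)$, the task reduces to verifying
\[
G(t_0)\le 2^{\beta d}c\le G(t_1).
\]
The key algebraic identity underlying everything is $t_0^{\beta-1}=\beta^\beta c$, and hence $t_1^{\beta-1}=2^{\beta d}\beta^\beta c$.

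\textbf{Upper estimate $G(t_0)\le 2^{\beta d}c$.} Since $c\ge 1$ and $\beta>1$ we have $t_0/\beta=(c\beta)^{1/(\beta-1)}\ge 1$, so $1+t_0/\beta\le 2t_0/\beta$. Raising to the power $\beta$ and using $t_0^{\beta-1}=\beta^\beta c$ gives
\[
(1+t_0/\beta)^\beta\le 2^\beta t_0^\beta/\beta^\beta=2^\beta c\,t_0\le 2^\beta c(1+t_0)\le 2^{\beta d}c(1+t_0),
\]
the last step using $d\ge 1$. Dividing by $1+t_0$ yields $G(t_0)\le 2^{\beta d}c$, hence $s\le -t_0$.

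\textbf{Lower estimate $G(t_1)\ge 2^{\beta d}c$.} Here Bernoulli's inequality does the work: since $\beta\ge 1$ and $\beta/t_1\ge 0$,
\[
(\beta+t_1)^\beta=t_1^\beta(1+\beta/t_1)^\beta\ge t_1^\beta(1+\beta^2/t_1)=t_1^\beta+\beta^2 t_1^{\beta-1}.
\]
Therefore $(1+t_1/\beta)^\beta=\beta^{-\beta}(\beta+t_1)^\beta\ge \beta^{-\beta}t_1^{\beta-1}(t_1+\beta^2)=2^{\beta d}c(t_1+\beta^2)$ by the identity $t_1^{\beta-1}=2^{\beta d}\beta^\beta c$. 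Since $\beta^2\ge 1$, this last quantity is $\ge 2^{\beta d}c(1+t_1)$, and dividing by $1+t_1$ gives $G(t_1)\ge 2^{\beta d}c$, hence $s\ge -t_1$.

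\textbf{Main obstacle.} There is no serious obstacle; the two inequalities only require a coarse bound ($1+x\le 2x$ for $x\ge 1$) and Bernoulli. The asymmetric appearance of the factor $2^{\beta d/(\beta-1)}$ in \eqref{double} is explained by the fact that the asymptotic behavior of $G(t)$ for large $t$ is $t^{\beta-1}/\beta^\beta$, which is precisely matched by $t_1$; the lower endpoint $t_0$ is obtained by replacing $G(t)$ with this dominant term, producing the extra $2^{\beta d/(\beta-1)}$ gap.
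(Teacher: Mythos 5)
Your proof is correct, and it follows the same route as the paper's: both reduce the two inequalities to showing that the quantities you call $t_0$ and $t_1$ satisfy $G(t_0)\le 2^{\beta d}c\le G(t_1)$ using the monotonicity already noted when $s$ was defined, and both then close via the crude bound $1+x\le 2x$ (for $x\ge 1$) on the upper side and a Bernoulli/binomial estimate on the lower side. The only cosmetic differences are your substitution $t=-s$ and identities $t_0^{\beta-1}=\beta^\beta c$, $t_1^{\beta-1}=2^{\beta d}\beta^\beta c$, which make the bookkeeping more transparent, and that you keep the full Bernoulli coefficient (getting $t_1+\beta^2$ where the paper uses $(1+a)^\beta\ge a^\beta+a^{\beta-1}$ and gets $t_1+\beta$); either factor is $\ge 1$, so the conclusion is identical.
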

\begin{proof}
Let us first focus on the right-hand side inequality. By the definition of $s$, we see that we must prove that
$$ (1+\beta^{\beta/(\beta-1)}c^{1/(\beta-1)})(1+\beta^{1/(\beta-1)}c^{1/(\beta-1)})^{-\beta}\geq 2^{-\beta d}/c,$$
or
$$ (1+\beta^{1/(\beta-1)}c^{1/(\beta-1)})^\beta\leq 2^{\beta d}c(1+\beta^{\beta/(\beta-1)}c^{1/(\beta-1)}).$$
But this is simple and follows immediately from the estimate
$$ (1+\beta^{1/(\beta-1)}c^{1/(\beta-1)})^\beta\leq (2\beta^{1/(\beta-1)}c^{1/(\beta-1)})^\beta\leq 2^{\beta d}\beta^{\beta/(\beta-1)}c^{\beta/(\beta-1)}.$$
We turn our attention to the left-hand side inequality in \eqref{double}. This time we must show that
$$ (1+2^{\beta d/(\beta-1)}\beta^{1/(\beta-1)}c^{1/(\beta-1)})^\beta\geq 2^{\beta d}c(1+2^{\beta d/(\beta-1)}\beta^{\beta/(\beta-1)}c^{1/(\beta-1)}).$$
Using the bound $(1+a)^\beta\geq a^\beta+a^{\beta-1}$, we may write
\begin{align*}
 (1+2^{\beta d/(\beta-1)}\beta^{1/(\beta-1)}c^{1/(\beta-1)})^\beta&\geq 2^{\beta^2d/(\beta-1)}\beta^{\beta/(\beta-1)}c^{\beta/(\beta-1)}+2^{\beta d}\beta c\\
 &=2^{\beta d}c(\beta+2^{\beta d/(\beta-1)}\beta^{\beta/(\beta-1)}c^{1/(\beta-1)}),
\end{align*}
and the proof is finished.
\end{proof}

\begin{lemma}\label{aux2}
Suppose that $w$ is an $A_\beta$ weight with $[w]_{A_\beta}=c$. Then for any $r\in \big(\beta(1-s)/(\beta-s),\beta\big)$ we have $ [w]_{A_r}\leq C_{\beta,d,r,c}^{r-1}.$
\end{lemma}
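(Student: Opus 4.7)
The plan is to fix an arbitrary cube $Q\subset\R^d$ and bound
$$\left(\frac{1}{|Q|}\int_Q w\right)\left(\frac{1}{|Q|}\int_Q w^{-1/(r-1)}\right)^{r-1}$$
by $C_{\beta,d,r,c}^{r-1}$; taking the supremum over $Q$ then yields the lemma. Given any dyadic descendant $Q'$ of $Q$ (treating $Q$ itself as the root of its own dyadic tree, since the $A_r$ supremum is over arbitrary cubes), I attach the point
$$\mathtt{P}_{Q'}=\left(\frac{1}{|Q'|}\int_{Q'}w,\,\frac{1}{|Q'|}\int_{Q'}w^{-1/(\beta-1)}\right).$$
Jensen's inequality and the assumption $[w]_{A_\beta}=c$ show that $\mathtt{P}_{Q'}\in\Omega_c$ for every such $Q'$.

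Next I iterate \eqref{convB}. The $2^d$ dyadic children $Q'_1,\ldots,Q'_{2^d}$ of any $Q'$ satisfy $\mathtt{P}_{Q'}=2^{-d}\sum_k\mathtt{P}_{Q'_k}$ with each $\mathtt{P}_{Q'_k}\in\Omega_c$, so a straightforward induction on the generation gives
$$B(\mathtt{P}_Q)\geq \frac{1}{|Q|}\sum_{Q'\in\mathcal{D}_n(Q)}|Q'|\,B(\mathtt{P}_{Q'})=\frac{1}{|Q|}\int_Q B(\mathtt{P}_{Q^{(n)}(x)})\,\mathrm{d}x$$
for every $n\geq 0$, where $Q^{(n)}(x)$ denotes the $n$-th generation dyadic descendant of $Q$ containing $x$. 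At every Lebesgue point $x\in Q$ of both $w$ and $w^{-1/(\beta-1)}$ (both locally integrable, the latter because $w\in A_\beta$), $\mathtt{P}_{Q^{(n)}(x)}\to (w(x),w(x)^{-1/(\beta-1)})$, which lies on the lower boundary $uv^{\beta-1}=1$ of $\Omega_c$. By the boundary condition \eqref{bound0} and continuity of $B$ along the approach, $B(\mathtt{P}_{Q^{(n)}(x)})\to w(x)^{-1/(r-1)}$ a.e., so Fatou's lemma (using nonnegativity of the Vasyunin function $B$, a property of the explicit construction in \cite{V}) yields
$$B(\mathtt{P}_Q)\geq \frac{1}{|Q|}\int_Q w(x)^{-1/(r-1)}\,\mathrm{d}x.$$

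Combining this with the upper bound \eqref{bound} applied at $\mathtt{P}_Q\in\Omega_c\subset\Omega_{2^{\beta d}c}$ gives
$$\frac{1}{|Q|}\int_Q w^{-1/(r-1)}\leq C_{\beta,d,r,c}\left(\frac{1}{|Q|}\int_Q w\right)^{-1/(r-1)},$$
and raising to the $(r-1)$-th power and rearranging yields $[w]_{A_r}\leq C_{\beta,d,r,c}^{r-1}$. The only delicate point is the passage to the limit in the third step, which rests on the nonnegativity of $B$ in order to apply Fatou; everything else is the standard ``Bellman function plus iterated concavity on the dyadic tree'' manipulation, with the multidimensional concavity \eqref{convB} (established in the excerpt via Lemma \ref{geomlemma}) replacing the one-dimensional concavity used in \cite{V}.
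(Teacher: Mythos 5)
Your proof is correct and takes essentially the same approach as the paper: fix a cube, iterate the $2^d$-point concavity \eqref{convB} down the dyadic tree, pass to the limit via Lebesgue differentiation and Fatou to invoke the boundary condition \eqref{bound0}, and finish with the upper bound \eqref{bound} at the root. The only cosmetic difference is that you phrase the argument pointwise via $\mathtt{P}_{Q'}$ rather than via the conditional-expectation sequences $(\mathtt{w}_n,\mathtt{v}_n)$, and you are slightly more explicit than the paper about the nonnegativity and boundary continuity of the Vasyunin function needed to justify the Fatou step.
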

\begin{proof}
Fix a cube $\mathfrak{Q}\subset \R^d$ and consider the family of its ``dyadic'' subcubes. That is, for a given $n\geq 0$, let $\mathcal{Q}^n$ be the collection of  $2^{nd}$ pairwise disjoint cubes contained in $\mathfrak{Q}$, each of which has measure $2^{-nd}|\mathfrak{Q}|$. 
 Let $(\mathtt{w}_n)_{n\geq 0}$ and $(\mathtt{v}_n)_{n\geq 0}$ be the conditional expectations of $w$ and $w^{-1/(\beta-1)}$ with respect to $(\mathcal{Q}^n)_{n\geq 0}$. That is, for any $x\in \mathfrak{Q}$ and any nonnegative integer $n$, define
$$
\mathtt{w}_n(x)=\frac{1}{|Q|}\int_Q w\quad \mbox{ and }\quad \mathtt{v}_n(x)=\frac{1}{|Q|}\int_Q w^{-1/(\beta-1)},
$$
where $Q$ is the unique element of $\mathcal{Q}^n$ which contains $x$. Directly from this definition, we see that $\mathtt{w}_n$, $\mathtt{v}_n$ are constant on each $Q\in\mathcal{Q}^n$ and we have
$$ \mathtt{w}_n|_Q=\frac{1}{2^d}\sum_{R\subset Q,\,R\in \mathcal{Q}^{n+1}} \frac{1}{|R|}\int_R w=\frac{1}{2^d}\sum_{R\subset Q,\,R\in \mathcal{Q}^{n+1}}\mathtt{w}_{n+1}|_R$$
and
$$ \mathtt{v}_n|_Q=\frac{1}{2^d}\sum_{R\subset Q,\,R\in \mathcal{Q}^{n+1}} \frac{1}{|R|}\int_R w^{-1/(p-1)}=\frac{1}{2^d}\sum_{R\subset Q,\,R\in \mathcal{Q}^{n+1}}\mathtt{v}_{n+1}|_R.$$
Therefore, by \eqref{convB}, we see that
$$ \int_Q B(\mathtt{w}_{n+1},\mathtt{v}_{n+1})\mbox{d}u\leq \int_Q B(\mathtt{w}_{n},\mathtt{v}_{n})\mbox{d}u$$
for all $n=0,\,1,\,2,\,\ldots$, and hence, summing over all $Q\in \mathcal{Q}^n$, we get
$$ \int_\mathfrak{Q} B(\mathtt{w}_{n+1},\mathtt{v}_{n+1})\mbox{d}u\leq \int_\mathfrak{Q} B(\mathtt{w}_{n},\mathtt{v}_{n})\mbox{d}u.$$
Consequently, by induction, we obtain
\begin{align*}
 \int_\mathfrak{Q} B(\mathtt{w}_{n},\mathtt{v}_n)\mbox{d}u\leq \int_\mathfrak{Q} B(\mathtt{w}_0,\mathtt{v}_0)\mbox{d}u&=|\mathfrak{Q}|B\left(\frac{1}{|\mathfrak{Q}|}\int_\mathfrak{Q} w\mbox{d}u,\frac{1}{|\mathfrak{Q}|}\int_\mathfrak{Q} w^{-1/(\beta-1)}\mbox{d}u\right)\\
 &\leq  C_{\beta,d,r,c}|\mathfrak{Q}| \left(\frac{1}{|\mathfrak{Q}|}\int_\mathfrak{Q} w\mbox{d}u\right)^{-1/(r-1)},
 \end{align*}
 where in the last passage we have exploited \eqref{bound}. Now if we let $n\to \infty$, then $\mathtt{w}_n\to w$ and $\mathtt{v}_n\to w^{-1/(\beta-1)}$ almost everywhere, in view of Lebesgue's differentiation theorem. Therefore the above estimate, combined with Fatou's lemma and \eqref{bound0}, gives
$$ \int_\mathfrak{Q} w^{-1/(r-1)}\mbox{d}u=\int_\mathfrak{Q} B(w,w^{-1/(\beta-1)})\mbox{d}u\leq C_{\beta,d,r,c}|\mathfrak{Q}|\left(\frac{1}{|\mathfrak{Q}|}\int_\mathfrak{Q} w\mbox{d}u\right)^{-1/(r-1)}.$$
Multiplying both sides by $ |\mathfrak{Q}|^{-1}\left(\frac{1}{|\mathfrak{Q}|}\int_\mathfrak{Q} w\mbox{d}u\right)^{1/(r-1)}$ and taking the supremum over all $\mathfrak{Q}$, we obtain the desired upper bound for $[w]_{A_r}$.
\end{proof}

\begin{proof}[Proof of Theorem \ref{weights}] Put $c=[w]_{A_\beta}$. 
Let us first note that the number $r$ defined in \eqref{defr} belongs to the interval $\big(\beta(1-s)/(\beta-s),\beta\big)$. Indeed, the inequality $r<\beta$ is evident, so all we need is the lower bound $r>\beta(1-s)/(\beta-s)$. After some easy manipulations, it can be transformed into
$$ s>-2^{\beta d/(\beta-1)+1}\beta^{\beta/(\beta-1)}c^{1/(\beta-1)},$$
which follows from the left inequality in \eqref{double}. Thus we are allowed to apply Lemma \ref{aux2} with the above choice of $r$ and therefore we will be done if we give the appropriate upper bound for 
$$ C_{\beta,d,r,c}^{r-1}=(2^{\beta d}c)^{r/\beta}(1-s)^{(r-\beta)/\beta}\left[1+\frac{\beta-r}{\beta(r-1)}s\right]^{1-r}.$$
 Let us analyze the three factors appearing in this expression. We have
$$ (2^{\beta d}c)^{r/\beta}=2^{rd}c^{r/\beta},$$
and, since $(r-\beta)/\beta<0$, the right inequality in \eqref{double} gives
$$ (1-s)^{(r-\beta)/\beta}\leq (-s)^{(r-\beta)/\beta}\leq (\beta^{\beta/(\beta-1)}c^{1/(\beta-1)})^{(r-\beta)/\beta}\leq c^{(r-\beta)/(\beta(\beta-1))}.$$
Finally, by the left inequality in \eqref{double}, we get
$$ 1+\frac{\beta-r}{\beta(r-1)}s\geq 1-\frac{\beta-r}{\beta(r-1)}\cdot 2^{\beta d/(\beta-1)}\beta^{\beta/(\beta-1)}c^{1/(\beta-1)}=\frac{1}{2},$$
and hence
$$ \left[1+\frac{\beta-r}{\beta(r-1)}s\right]^{1-r}<2^r.$$
Putting all the above facts together, we obtain $[w]_{A_r}\leq 2^{rd+r}c^{(r-1)/(\beta-1)}$, which is the desired claim.
\end{proof}

\section{Weighted inequalities for fractional maximal operator}
Let $0<\alpha<d$ be fixed, and let $p,\,q$ be given two parameters satisfying $\frac{1}{q}=\frac{1}{p}-\frac{\alpha}{d}$. Next, put $u=\frac{p-1}{p}q+1$ and let $r$ be an arbitrary number belonging to the interval $(1,u)$. In what follows, we will also need the parameter $s=s(p,q,\alpha)$ given by \eqref{defs} and the number 
$$ t=t(r,p,\alpha)=\frac{rpq}{(r-1)(q-p)+pq}.$$
Suppose that $w$ is a weight satisfying the condition $A_r$ and let $\varphi:\R^d\to \R$ be a function belonging to $L^p(w^{p/q})$. Of course, we may assume that $\varphi\geq 0$: in \eqref{mainin}, the passage $\varphi\to |\varphi|$ does not affect the $L^p$-norm of $\varphi$, and does not decrease the $L^q$ norm of $\mathcal{M}^\alpha\varphi$. Furthermore, it is enough to deal with bounded $\varphi$'s only, by a standard truncation argument. Next, let $\mathfrak{Q}$ be a fixed dyadic cube and, for each $n\geq 0$, let $\mathcal{Q}^n$ be the collection of all dyadic cubes of measure $|\mathfrak{Q}|/2^{nd}$, contained within $\mathfrak{Q}$. 
Consider the sequences $(\varphi_n)_{n\geq 0}$, $(\mathtt{w}_n)_{n\geq 0}$ and $(\mathtt{v}_n)_{n\geq 0}$ of conditional expectations of $\varphi$, $w$ and $w^{-1/(r-1)}$ with respect to $(\mathcal{Q}^n)_{n\geq 0}$. That is, in analogy with the previous section, for any $x\in \mathfrak{Q}$ and any nonnegative integer $n$, define
\begin{equation}\label{deffn}
 \varphi_n(x)=\frac{1}{|Q|}\int_Q \varphi,\quad \mathtt{w}_n(x)=\frac{1}{|Q|}\int_Q w\quad \mbox{ and }\quad \mathtt{v}_n(x)=\frac{1}{|Q|}\int_Q w^{-1/(r-1)},
\end{equation}
where $Q$ is the element of $\mathcal{Q}^n$ which contains $x$. 
We will also use the notation
$$ \psi_n(x)=|\mathfrak{Q}|^{\alpha/d}\max\big\{\varphi_0,2^{-\alpha}\varphi_1,2^{-2\alpha}\varphi_2,\,\ldots,\,2^{-n\alpha}\varphi_n\big\}.$$

Before we proceed, let us comment on the reasoning we are going to present. As in the preceding section, the proof will be based on the properties of a certain special function $B$. This time we have four parameters involved, corresponding to the sequences $(\varphi_n)_{n\geq 0}$, $(\psi_n)_{n\geq 0}$, $(\mathtt{w}_n)_{n\geq 0}$ and $(\mathtt{v}_n)_{n\geq 0}$, and thus it is natural to define $B$ on the four-dimensional domain
$$ \mathfrak{D}=\{(x,y,w,v):x\geq 0,\,y\geq 0,\,w>0,\,1\leq wv^{r-1}\leq c\}.$$
Here and below, we use the same letter ``$w$'' to denote the weight and the third coordinate; as we hope, this should not cause any confusion and it should be clear from the context which object we are using at the moment.  
A natural \emph{idea} to follow is to find $B$ for which the sequence $\left(\int_{\mathfrak{Q}} B(\varphi_n,\psi_n,\mathtt{w}_n,\mathtt{v}_n)\mbox{d}z\right)_{n\geq 0}$ is nonincreasing, nonpositive and satisfies the majorization of the form
\begin{equation*}
\begin{split}
 &\liminf_{n\to \infty} \int_{\mathfrak{Q}}  B(\varphi_n,\psi_n,\mathtt{w}_n,\mathtt{v}_n)\mbox{d}z \\
&\qquad \qquad \qquad \geq ||\mathcal{M}^\alpha_d\varphi||_{L^q(\mathfrak{Q};w)}^q-\left(\frac{q}{s}\right)^{q/(q-s)}c^{q/(q-s)}||\varphi||_{L^p(\mathfrak{Q};w^{p/q})}^q
\end{split}
\end{equation*}
or, after some standard limiting arguments (Lebesgue's differentiation theorem, Fatou's lemma, etc.),
\begin{equation}\label{major}
  \int_{\mathfrak{Q}}  B(\varphi,\mathcal{M}^\alpha_d\varphi,w,w^{-1/(r-1)})\mbox{d}z \geq ||\mathcal{M}^\alpha_d\varphi||_{L^q(\mathfrak{Q};w)}^q-\left(\frac{cq}{s}\right)^{q/(q-s)}||\varphi||_{L^p(\mathfrak{Q};w^{p/q})}^q.
\end{equation}
A typical approach in the study of such majorization is to prove the corresponding \emph{pointwise} bound. The problem is that the right hand side above is a mixture of $L^p$ and $L^q$ norms and is \emph{not} of integral form. That is, there seems to be no pointwise inequality which after integration would yield the above majorization. In addition, no manipulations with the inequality \eqref{major} (for instance, replacing the right hand side by
$$ ||\mathcal{M}^\alpha_d\varphi||_{L^q(\mathfrak{Q};w)}-\left(\frac{q}{s}\right)^{1/(q-s)}c^{1/(q-s)}||\varphi||_{L^p(\mathfrak{Q};w^{p/q})}$$
or other expressions of this type) 
seem to lead to the convenient majorization in the integral form. 
To overcome this difficulty, we will make use of the following novel argument which, to the best of our knowledge, has not been used before. Namely, to establish the inequality \eqref{mainin}, we will work with a special function $B$ which itself depends on $\varphi$. Specifically, put 
$$ B(x,y,w,v)=B_{\varphi}(x,y,w,v)=y^qw-\frac{q}{s}||\varphi||_{L^p(\mathfrak{Q};w^{p/q})}^{q-s-t}cx^ty^sv^{1-t}.$$

In the two lemmas below, we study certain crucial properties of this object. We start with the following monotonicity condition.

\begin{lemma}\label{mainlemma}
Suppose that $[w]_{A_r}= c$. Then for any nonnegative integer $n$ we have
\begin{equation}\label{conv}
\int_\mathfrak{Q}B(\varphi_n,\psi_n,\mathtt{w}_n,\mathtt{v}_n)\mbox{d}x\leq \int_\mathfrak{Q} B(\varphi_0,\psi_0,\mathtt{w}_0,\mathtt{v}_0)\mbox{d}x.
\end{equation}
\end{lemma}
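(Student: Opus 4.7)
The plan is to prove the statement by dyadic telescoping, reducing the monotonicity of $n\mapsto \int_\mathfrak{Q} B(\varphi_n,\psi_n,\mathtt{w}_n,\mathtt{v}_n)\,\mbox{d}x$ to a finite $2^d$-ary Bellman inequality at each refinement step. By iteration it suffices to prove, for each $n\geq 0$,
$$
\int_\mathfrak{Q} B(\varphi_{n+1},\psi_{n+1},\mathtt{w}_{n+1},\mathtt{v}_{n+1})\,\mbox{d}x\leq \int_\mathfrak{Q} B(\varphi_{n},\psi_{n},\mathtt{w}_{n},\mathtt{v}_{n})\,\mbox{d}x.
$$
Since the four sequences are constant on each $Q\in\mathcal{Q}^n$ at level $n$ and on each child $R\subset Q$ at level $n+1$ (with $|R|=|Q|/2^d$), this decomposes into one finite inequality per $Q$, of the form
$$
\frac{1}{2^d}\sum_{R\subset Q,\,R\in\mathcal{Q}^{n+1}}B(x_R,y_R,w_R,v_R)\leq B(\bar x,\bar y,\bar w,\bar v),
$$
where $\bar x,\bar w,\bar v$ are the arithmetic means of $x_R,w_R,v_R$ (from the martingale property of conditional expectations under dyadic refinement) and $y_R=\max(\bar y,\,c_{n+1}x_R)$ with $c_{n+1}=|\mathfrak{Q}|^{\alpha/d}2^{-(n+1)\alpha}$.

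To verify the local inequality I would split the children into the ``good'' set $J=\{R:c_{n+1}x_R\leq \bar y\}$, on which $y_R=\bar y$, and the ``bad'' set $I=\{R:c_{n+1}x_R>\bar y\}$, on which $y_R=c_{n+1}x_R$. On $J$ the variable $y$ is frozen, and the analysis reduces to the convexity of the geometric-mean-type function $(x,v)\mapsto x^t v^{1-t}$ (one checks from the definitions that $t\in(1,p)$), which combined with $L>0$ (one also checks that $s\in(0,q)$ under $r<q/p'+1$) handles this piece. For $R\in I$ one substitutes $y_R=c_{n+1}x_R$ and exploits the key algebraic identity
$$
\frac{t}{p}+\frac{s}{q}=1,
$$
a direct consequence of \eqref{defs} and the definition of $t$. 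This identity furnishes Hölder conjugate exponents $p/t$ and $q/s$, allowing Young's inequality to regroup the resulting product $x_R^{t+s}v_R^{1-t}$ into factors matched to the $L^p$--$L^q$ scaling of $\varphi$.

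The closing step combines these two partial estimates with the $A_r$ condition $\bar w\,\bar v^{r-1}\leq c:=[w]_{A_r}$, which holds on every cube directly from the definition of the $A_r$ characteristic. The constant $L=\frac{q}{s}\|\varphi\|_{L^p(\mathfrak{Q};w^{p/q})}^{q-s-t}c$ appearing in $B$ has been calibrated precisely so that the positive contribution of the $y^q w$ term coming from the ``bad'' children $R\in I$ is exactly compensated by the negative contribution of the second term of $B$.

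The main obstacle is the max-structure of $\psi_n$: since $y_R$ is not an arithmetic average of the children's values, the usual ``local concavity implies $2^d$-ary concavity'' paradigm of Section~2 does not apply directly, and indeed $B$ is not locally concave in its four variables. The novelty of the approach, as telegraphed in the text preceding the lemma, is to let $B$ itself depend on the global quantity $\|\varphi\|_{L^p(\mathfrak{Q};w^{p/q})}$; this extra ``external'' factor is exactly the degree of freedom needed to absorb the mismatch between the $L^p$ and $L^q$ scales and to bring the Young-inequality estimates in the case $R\in I$ to a sharp balance.
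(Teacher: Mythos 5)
Your framework is right — telescoping to a one–step inequality, decomposing $\mathfrak{Q}$ into the cubes of $\mathcal{Q}^n$, and splitting each cube's children into the set $J$ where $\psi_{n+1}$ freezes at $\psi_n$ and the set $I$ where $\psi_{n+1}=c_{n+1}\varphi_{n+1}$. Your treatment of $J$ is also correct and matches the paper: with $y$ constant, the first term $y^q w$ averages exactly (linearity in $w$) and the second term is controlled by the convexity of $(x,v)\mapsto x^t v^{1-t}$, which holds because $t>1$; you also correctly note that $s\in(0,q)$ and $t\in(1,p)$ for $1<r<u$.

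The gap is in the bad set $I$. You invoke the identity $t/p+s/q=1$ and then Young's inequality with conjugate exponents $p/t$ and $q/s$ to ``regroup'' the product $x_R^{t+s}v_R^{1-t}$, claiming this balances against $c_{n+1}^q x_R^q w_R$. But Young's inequality is a \emph{pointwise} upper bound on a product, and it goes the wrong way here: bounding the (subtracted) second term of $B$ from above only produces a \emph{lower} bound on $B(x_R,y_R,w_R,v_R)$, whereas you need an upper bound. More fundamentally, no pointwise algebraic inequality in the four variables can close the estimate, because the constant $L=\frac{q}{s}\|\varphi\|_{L^p(\mathfrak{Q};w^{p/q})}^{q-s-t}c$ involves the \emph{global} norm $\|\varphi\|_{L^p(\mathfrak{Q};w^{p/q})}$, and this must somehow be tied to the \emph{local} data $(\varphi_n(z),\mathtt{v}_n(z),|Q^n(z)|)$ on the cube where the ``bad'' increment occurs. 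The mechanism the paper uses for exactly this purpose is a three-factor H\"older inequality \emph{on the cube} $Q^n(z)$:
\begin{equation*}
|Q^n(z)|\varphi_n(z)=\int_{Q^n(z)}\varphi\le \Big(\int_{Q^n(z)}\varphi^pw^{p/q}\Big)^{1/p}\Big(\int_{Q^n(z)}w^{-1/(r-1)}\Big)^{(r-1)/q}|Q^n(z)|^{1-\frac1p-\frac{r-1}{q}},
\end{equation*}
which is then combined with the constraint $y\le|Q^n(z)|^{\alpha/d}\varphi_n(z)$ and with $\mathtt{w}_n\mathtt{v}_n^{r-1}\le c$ to show that $\partial_y B\le 0$ on the relevant $y$-range. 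That derivative bound is what makes $B(\varphi_n,\psi_n,\mathtt{w}_n,\mathtt{v}_n)\le B(\varphi_n,\psi_{n-1},\mathtt{w}_n,\mathtt{v}_n)$ pointwise, i.e.\ what handles $I$; your proposal replaces it by Young's inequality, which cannot reach the global $L^p$ norm hidden in $L$, so the ``exact compensation'' you assert at the end is unjustified. If you substitute the cube-level H\"older step for the Young step, the remainder of your outline lines up with the paper's Steps~1--3.
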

\begin{proof} Note that $(\varphi_n,\psi_n,\mathtt{w}_n,\mathtt{v}_n)\in \mathfrak{D}$ (so the composition $B(\varphi_n,\psi_n,\mathtt{w}_n,\mathtt{v}_n)$ makes sense) due to assumption $[w]_{A_r}= c$. Clearly, it suffices to show the inequality
$$ \int_\mathfrak{Q}B(\varphi_{n},\psi_{n},\mathtt{w}_{n},\mathtt{v}_{n})\mbox{d}z\leq \int_\mathfrak{Q} B(\varphi_{n-1},\psi_{n-1},\mathtt{w}_{n-1},\mathtt{v}_{n-1})\mbox{d}z.$$
This will be done in the several separate steps below.

\smallskip

\emph{Step 1.} First we will show the pointwise estimate
\begin{equation}\label{convaux}
\begin{split}
&B\big(\varphi_{n}(z),\psi_{n}(z),\mathtt{w}_{n}(z),\mathtt{v}_{n}(z)\big)\\
&\qquad \qquad \leq B\big(\varphi_{n}(z),\psi_{n-1}(z),\mathtt{w}_{n}(z),\mathtt{v}_{n}(z)\big),\qquad z\in \mathfrak{Q}.
\end{split}
\end{equation}
Let $Q^n(z)$ be the element of $\mathcal{Q}^{n}$ which contains $z$. 
The above bound is trivial when $\psi_{n}(z)>|Q^n(z)|^{\alpha/d}\varphi_{n}(z)=|\mathfrak{Q}|^{\alpha/d}2^{-n\alpha}\varphi_{n}(z)$, since then $\psi_{n}(z)=\psi_{n-1}(z)$. Suppose that $\psi_{n}(z)\leq |Q^n(z)|^{\alpha/d}\varphi_{n}(z)$ (so actually we have equality: see the definition of the sequence $\psi$). Since $\psi_{n}(z)\geq \psi_{n-1}(z)$, we will be done if we show that 
\begin{equation}\label{part_ineq}
 \frac{\partial }{\partial y}B\big(\varphi_{n}(z),y,\mathtt{w}_{n}(z),\mathtt{v}_{n}(z)\big)\leq 0
\end{equation}
for $y\in \big(0,|Q^n(z)|^{\alpha/d}\varphi_{n}(z)\big)$, $n=0,\,1,\,2,\,\ldots$. The partial derivative equals
$$ qy^{s-1}\mathtt{w}_{n}(z)\left[y^{q-s}-||\varphi||_{L^p(\mathfrak{Q};w^{p/q})}^{q-s-t}c\varphi_{n}(z)^t\mathtt{w}_{n}(z)^{-1}\mathtt{v}_{n}(z)^{1-t}\right].$$
However, we have $\mathtt{w}_{n}(z)\mathtt{v}_{n}(z)^{r-1}\leq c$, directly from the assumption $[w]_{A_r}=c$. Furthermore, we have $y<|Q^n(z)|^{\alpha/d}\varphi_{n}(z)$, which has been just imposed above. Consequently, we see that the partial derivative in \eqref{part_ineq} is not larger than
$$ qy^{s-1}\mathtt{w}_{n}(z)\varphi_{n}(z)\left[|Q^n(z)|^{\alpha(q-s)/d}\varphi_{n}(z)^{q-s-t}-||\varphi||_{L^p(\mathfrak{Q};w^{p/q})}^{q-s-t}\mathtt{v}_{n}(z)^{r-t}\right].$$
However, by H\"older's inequality, we may write
\begin{align*}
& |Q^n(z)|\varphi_{n}(z)
=\int_{Q^n(z)} \varphi\\
&\leq \left(\int_{Q^n(z)} \varphi^p w^{p/q}\right)^{1/p}\left(\int_{Q^n(z)}w^{-1/(r-1)}\right)^{(r-1)/q}|Q^n(z)|^{1-1/p-(r-1)/q}\\
&\leq ||\varphi||_{L^p(\mathfrak{Q};w^{p/q})}\mathtt{v}_{n}(z)^{(r-1)/q}|Q^n(z)|^{1-1/p-(r-1)/q},
\end{align*}
which is equivalent to
$$ |Q^n(z)|^{\alpha(q-s)/d}\varphi_{n}(z)^{q-s-t}-||\varphi||_{L^p(\mathfrak{Q};w^{p/q})}^{q-s-t}\mathtt{v}_{n}(z)^{r-t}\leq 0.$$
This shows that \eqref{part_ineq}, and hence also \eqref{convaux}, hold true. 

\smallskip

\emph{Step 2.} Now, observe that the $C^1$ function $G:[0,\infty)\times (0,\infty)\to [0,\infty)$ given by $G(x,v)= x^tv^{1-t}$ is convex. This is straightforward: for $x,\,v>0$, the Hessian matrix
$$ D^2G(x,v)=\left[\begin{array}{cc}
t(t-1)x^{t-2}v^{1-t} & t(1-t)x^{t-1}v^{-t}\\
r(1-t)x^{t-1}v^{-t} & t(t-1)x^t v^{-1-t}
\end{array}\right]$$
is nonnegative-definite.

\smallskip

\emph{Step 3.} We are ready to establish the desired bound \eqref{conv}. Pick an arbitrary element $Q$ of $\mathcal{Q}^{n-1}$ and apply \eqref{convaux} to get 
  $$ \int_QB(\varphi_{n},\psi_{n},\mathtt{w}_{n},\mathtt{v}_{n})\mbox{d}z\leq 
 \int_QB(\varphi_{n},\psi_{n-1},\mathtt{w}_{n},\mathtt{v}_{n})\mbox{d}z.$$
 Directly from the definition of the sequences $(\varphi_n)_{n\geq 0}$, $(\mathtt{w}_n)_{n\geq 0}$ and $(\mathtt{v}_{n})_{n\geq 0}$, we see that $\varphi_{n-1}$, $\mathtt{w}_{n-1}$ and $\mathtt{v}_{n-1}$ are constant on $Q$ and equal to
 $$ \frac{1}{|Q|}\int_Q \varphi_{n}\mbox{d}z,\qquad \frac{1}{|Q|}\int_Q \mathtt{w}_{n}\mbox{d}z\qquad \mbox{and}\qquad \frac{1}{|Q|}\int_Q \mathtt{v}_{n}\mbox{d}z$$
there, respectively. Now, we use Step 2 and the formula for $B$ to get
$$ \int_QB(\varphi_{n},\psi_{n-1},\mathtt{w}_{n},\mathtt{v}_{n})\mbox{d}z\leq 
\int_QB(\varphi_{n-1},\psi_{n-1},\mathtt{w}_{n-1},\mathtt{v}_{n-1})\mbox{d}z$$
and thus
$$ \int_QB(\varphi_{n},\psi_{n},\mathtt{w}_{n},\mathtt{v}_{n})\mbox{d}z\leq \int_QB(\varphi_{n-1},\psi_{n-1},\mathtt{w}_{n-1},\mathtt{v}_{n-1})\mbox{d}z.$$
It remains to sum over all $Q\in \mathcal{Q}^{n-1}$ to get the claim.
\end{proof}

We will also require the following properties.

\begin{lemma}\label{easyprop}
(i) If $(x,y,w,v)\in \mathfrak{D}$ satisfies $y=|\mathfrak{Q}|^{\alpha/d}x$, then we have the pointwise inequality
$$
B(x,y,w,v)\leq 0.
$$

(ii) For any $(x,y,w,v)\in \mathfrak{D}$ we have the majorization
$$
B(x,y,w,v)\geq \frac{q-s}{q}\left[ y^pw-\left(\frac{q}{s}\right)^{q/(q-s)}||\varphi||_{L^p(\mathfrak{Q};w^{p/q})}^{q(q-s-t)/(q-s)}c^{q/(q-s)}(xw^{1/q})^p\right].
$$
\end{lemma}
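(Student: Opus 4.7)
The plan is to observe first that $s\in(0,q)$: the inequality $s>0$ is equivalent, via \eqref{defs}, to $r<1+q/p'=u$, which is our standing assumption, while $s<q$ is immediate.

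For (i), since $s>0$, the inequality $B(x,y,w,v)\le 0$ together with $y=|\mathfrak{Q}|^{\alpha/d}x$ rearranges into
\[
|\mathfrak{Q}|^{\alpha(q-s)/d}x^{q-s-t}wv^{t-1}\le\frac{q}{s}||\varphi||_{L^p(\mathfrak{Q};w^{p/q})}^{q-s-t}c.
\]
The key input is a H\"older-type bound on $x$. Applying H\"older's inequality to $\int_\mathfrak{Q}\varphi=\int_\mathfrak{Q}(\varphi w^{1/q})\cdot w^{-1/q}\cdot 1$ with the three exponents $p$, $q/(r-1)$, and $(1-1/p-(r-1)/q)^{-1}$, exactly as in Step 1 of Lemma \ref{mainlemma}, yields
\[
x\le||\varphi||_{L^p(\mathfrak{Q};w^{p/q})}\,v^{(r-1)/q}\,|\mathfrak{Q}|^{-1/p}
\]
at the initial level where $x=\varphi_0$, $v=\mathtt{v}_0$. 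Substituting into the target inequality, the identity $\alpha(q-s)/d=(q-s-t)/p$ cancels the $|\mathfrak{Q}|$ factor, and the identity $(r-1)(q-s-t)/q+t-1=r-1$---both consequences of $t=(q-s)p/q$ and \eqref{defs}---collapses the $v$-exponent to $r-1$. The $A_r$ upper bound $wv^{r-1}\le c$ then absorbs the remaining factor, reducing the claim to $1\le q/s$, which holds since $s<q$.

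For (ii), the subtracted term of $B$ splits naturally as $a\cdot b$ with
\[
a=y^sw^{s/q},\qquad b=\frac{q}{s}||\varphi||^{q-s-t}cx^tv^{1-t}w^{-s/q},
\]
and Young's inequality with the conjugate pair $q/s$ and $q/(q-s)$ gives
\[
ab\le\frac{s}{q}a^{q/s}+\frac{q-s}{q}b^{q/(q-s)}=\frac{s}{q}y^qw+\frac{q-s}{q}b^{q/(q-s)},
\]
hence $B(x,y,w,v)\ge\frac{q-s}{q}\bigl[y^qw-b^{q/(q-s)}\bigr]$ (the exponent $y^p$ in the stated bound appears to be a typographical slip for $y^q$, which is what is needed so that integration against $w$ yields $||\mathcal{M}^\alpha_d\varphi||_{L^q(w)}^q$). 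The identity $tq/(q-s)=p$, immediate from $t=(q-s)p/q$, turns the $x$-factor of $b^{q/(q-s)}$ into exactly $x^p$. For the remaining $v,w$ factors I invoke the $A_r$ lower bound $v\ge w^{-1/(r-1)}$; since $1-t<0$, this gives $v^{(1-t)q/(q-s)}\le w^{(t-1)q/((r-1)(q-s))}$, and a short computation from \eqref{defs} shows the combined exponent on $w$ equals exactly $p/q$, producing the announced constant.

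The main obstacle in both parts is algebraic bookkeeping: the formulas \eqref{defs} for $s$ and $t=rpq/((r-1)(q-p)+pq)$ are engineered so that the powers of $|\mathfrak{Q}|$, $v$, and $w$ conspire to match. Once these identities are verified, each part of the lemma reduces to a single application of H\"older or Young combined with one of the two $A_r$ bounds $1\le wv^{r-1}\le c$.
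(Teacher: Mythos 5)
Your argument is correct and follows essentially the same route as the paper: part (i) unwinds $B\le0$ directly with the same H\"older estimate that underlies \eqref{part_ineq} (the paper instead integrates the derivative bound $\partial B/\partial y\le0$ from $y=0$, but the algebra is identical), and part (ii) is a Young-inequality reformulation of the paper's convexity bound $\beta^{q/(q-s)}-1\ge\frac{q}{q-s}(\beta-1)$, which is the same estimate in different clothing. You are also right that $y^p w$ in the statement (and in the paper's own displayed conclusion) is a typographical slip for $y^q w$, since the application in the proof of \eqref{mainin} produces $\int_\mathfrak{Q}\psi_n^q\,\mathtt{w}_n\,\mathrm{d}z$ on the left.
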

\begin{proof}
(i) This follows from \eqref{part_ineq} with $n=0$. Indeed, for any $w,\,v$ satisfying $1\leq wv\leq c$ there is a weight $w$ with $[w]_{A_r}\leq c$ satisfying $\mathtt{w}_0=w$, $\mathtt{v}_0=v$ (see e.g. \cite{V}). Thus, if we put $\varphi\equiv x$, then $\varphi_0=x$, $\psi_0=y$ and hence \eqref{part_ineq} gives
$$ B(x,y,w,v)\leq B(\varphi_0(z),0,\mathtt{w}_0(z),\mathtt{v}_0(z))=0.$$

(ii) Of course, it suffices to show the bound for $y>0$. By the mean value property, for any $\beta\geq 0$ we have
$$ \beta^{q/(q-s)}-1\geq \frac{q}{q-s}(\beta-1).$$
Plugging 
$$ \beta=\frac{q}{s}||\varphi||_{L^p(w^{p/q})}^{q-s-t}cx^ty^{s-q}w^{(p-q)(q-s)/q^2+1}$$
and multiplying both sides by $y^qw$, we obtain an inequality which is equivalent to 
\begin{align*}
& B(x,y,w,w^{1/(1-r)})\geq \\
&\frac{q-s}{q}\left[ y^pw-\left(\frac{q}{s}\right)^{q/(q-s)}||\varphi||_{L^p(\mathfrak{Q};w^{p/q})}^{q(q-s-t)/(q-s)}c^{q/(q-s)}(xw^{1/q})^p\right].
\end{align*}
It remains to observe that $B(x,y,w,v)$ increases when $v$ increases, and therefore we have $B(x,y,w,v)\geq B(x,y,w,w^{1/(1-r)})$.
\end{proof}

We are ready to establish our main result.

\begin{proof}[Proof of \eqref{mainin}] Combining the previous two lemmas, we obtain the bound
 \begin{align*}
& \int_{\mathfrak{Q}} \psi_n(z)^q \mathtt{w}_n(z)\mbox{d}z\\
&\leq  \left(\frac{q}{s}\right)^{q/(q-s)}||\varphi||_{L^p(\mathfrak{Q};w^{p/q})}^{q(q-s-t)/(q-s)}c^{q/(q-s)}\int_\mathfrak{Q}\varphi_n(z)^p\mathtt{w}_n(z)^{p/q}\mbox{d}z.
\end{align*}
All that is left is to carry out appropriate limiting procedure. First, let $n$ go to infinity. Then $\psi_n$ increases to
$$ \psi_\infty=\sup\!\left\{\!|Q|^{\alpha-1}\int_Q \varphi\;:\;x\in Q, Q \in\mathcal{Q}^k\mbox{ for some }k\right\}.$$
In addition, we have $\varphi_n\to \varphi$ almost everywhere (by Lebesgue's differentiation theorem) and $\mathtt{w}_n\to w$ in $L^1(\mathfrak{Q})$, by the standard facts concerning conditional expectations (see e.g. Doob \cite{Do}). Putting these facts together, and combining them with the boundedness of $\varphi$ assumed at the beginning, we get
\begin{align*}
 \int_\mathfrak{Q} \psi_\infty^q w&\leq \liminf_{n\to \infty} \int_\mathfrak{Q} \psi_n^q \mathtt{w}_n\\
 &\leq \left(\frac{q}{s}\right)^{q/(q-s)}||\varphi||_{L^p(w^{p/q})}^{q(q-s-t)/(q-s)}c^{q/(q-s)}\liminf_{n\to\infty}\int_\mathfrak{Q}\varphi_n^p\mathtt{w}_n^{p/q}\\
 &\leq  \left(\frac{q}{s}\right)^{q/(q-s)}||\varphi||_{L^p(w^{p/q})}^{q(q-s-t)/(q-s)}c^{q/(q-s)}||\varphi||_{L^p(w^{p/q})}\\
 &=\left(\frac{q}{s}\right)^{q/(q-s)}c^{q/(q-s)}||\varphi||_{L^p(w^{p/q})}^q.
\end{align*}
Next, assume that $\mathfrak{Q}_1\subseteq \mathfrak{Q}_2\subseteq \ldots$ is a strictly increasing sequence of dyadic cubes, and apply the above estimate with respect to $\mathfrak{Q}=\mathfrak{Q}_n$. Then, as $n\to \infty$, we have $\psi_\infty\uparrow \mathcal{M}^\alpha_d\varphi$ and hence \eqref{mainin} follows from Lebesgue's monotone convergence theorem.
\end{proof}

\section{A weighted version of Doob's inequality}

In the particular case $\alpha=0$, Theorem \ref{mainth} gives the following statement for Hardy-Littlewood maximal operator.
\begin{theorem}\label{mainth0}
Let $w$ be an $A_p$ weight, $1<p<\infty$. Then for any locally integrable function $\varphi$ on $\R^d$ we have
\begin{equation}\label{mainin11}
 ||\mathcal{M}_d\varphi||_{L^p(w)}\leq \inf_{1< r<p}\left\{[w]_{A_r}^{1/r}\left(\frac{p}{p-r}\right)^{1/r}\right\}||\varphi||_{L^p(w)}.
\end{equation}
\end{theorem}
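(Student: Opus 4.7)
The plan is to obtain Theorem~\ref{mainth0} as a direct specialization of Theorem~\ref{mainth} to the case $\alpha=0$, so no new ideas (Bellman function, extraction of subsequences, etc.) are needed beyond a short algebraic verification that the parameters in \eqref{mainin} collapse to the parameters in \eqref{mainin11}.

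First I would observe that when $\alpha=0$, the relation $\tfrac{1}{q}=\tfrac{1}{p}-\tfrac{\alpha}{d}$ forces $q=p$, and the operator $\mathcal{M}^\alpha_d$ becomes the usual dyadic Hardy--Littlewood maximal operator $\mathcal{M}_d$. Also, the allowed range of $r$ in Theorem~\ref{mainth} is $1<r<q/p'+1$, which with $q=p$ becomes $1<r<p-1+1=p$, matching the range of $r$ in \eqref{mainin11}.

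Next I would compute the parameter $s$ from \eqref{defs} under these substitutions. With $q=p$ the denominator in \eqref{defs} simplifies to $(r-1)\cdot 0+p\cdot p=p^2$, so
$$s=p-\frac{rp^{2}}{p^{2}}=p-r.$$
Consequently $q-s=p-(p-r)=r$ and $q/s=p/(p-r)$. Inserting these identities into \eqref{mainin} gives exactly
$$\|\mathcal{M}_d\varphi\|_{L^p(w)}\leq \inf_{1<r<p}\Bigl\{[w]_{A_r}^{1/r}\Bigl(\frac{p}{p-r}\Bigr)^{1/r}\Bigr\}\|\varphi\|_{L^p(w^{p/p})},$$
and since $w^{p/q}=w^{p/p}=w$, this is precisely \eqref{mainin11}. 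The hypothesis that $w$ is an $A_{q/p'+1}=A_p$ weight is already the hypothesis of Theorem~\ref{mainth0}, so the invocation of Theorem~\ref{mainth} is legitimate.

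Since every step is a clean substitution into an already-proven result, there is no real obstacle here; the only point requiring any attention is the arithmetic check that $s=p-r$ in the $\alpha=0$ case, after which the estimate \eqref{mainin11} is merely a rewriting of \eqref{mainin}.
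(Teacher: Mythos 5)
Your proof is correct and coincides exactly with the paper's approach: the paper states Theorem~\ref{mainth0} precisely as the $\alpha=0$ specialization of Theorem~\ref{mainth}, and the only work is the algebraic check that $q=p$, $s=p-r$, $q-s=r$, and $w^{p/q}=w$, which you carry out correctly.
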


This theorem can be given a probabilistic meaning, in terms of martingales, and such a result was proved in \cite{IK}. Let us recall the general setting of $A_p$ martingales. Suppose that $(\Omega, \calF, \Pro)$ is a probability space with a non-decreasing right continuous
family $(\calF_t)_{t\geq 0}$ of sub $\sigma$-fields of $\calF$ such that $\calF_0$  contains all $\Pro$-null sets. 
Fix a random variable $Z$ such that $Z >0$ a.s. and such that $\E[Z]=1$. With $p>1$, we say that the random variable $Z$ is an $A_p$-weight  if 
\begin{equation}\label{probAp}
[Z]_{A_p}:=\sup_{t}\Big\|Z_{t}\left(\E\big[\left(\frac{1}{Z}\right)^{1/(p-1)}\big|\calF_t\big]\right)^{p-1}\Big\|_{L^{\infty}}<\infty,
\end{equation}
where $Z_t=\E\left[Z\, |\calF_t\right]$.  The following result can be derived by keeping track of the constants in the proof of  \cite[Theorem 2]{IK}.

\begin{theorem}\label{MuIzuKaz} Let $X_t=\E\left[X\, |\calF_t\right]$ be the martingale generated by the $\Pro$--integrable random variable $X$.  Suppose $Z\in A_{r}$, for some $r>1$.  Then for all $p>r$, 
\begin{equation}\label{weighteddoob}
\|X^{*}\|_{L^p(\hat{\Pro})}\leq [Z]_{A_r}^{1/r} \left(\frac{p}{p-r}\right)^{1/r} \|X\|_{L^p(\hat{\Pro})}, 
\end{equation}
where as usual $X^*=\sup_{t}|X_t|$ and $d\hat{\Pro}$ denotes the the measure $Zd\Pro$. 
\end{theorem}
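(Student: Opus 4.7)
The plan is to reduce the strong-type $L^p$ bound to a weak-type $(r,r)$ inequality under $\hat{\Pro}$ obtained from a stopping-time argument exploiting the $A_r$ hypothesis, and then upgrade to strong $L^p$ for $p>r$ by a standard layer-cake plus H\"older trick. The claimed constant will come out on the nose, with no further optimization required.

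The key auxiliary object is $V_t:=\E[Z^{-1/(r-1)}|\calF_t]$, so that the $A_r$ hypothesis reads $Z_tV_t^{r-1}\leq [Z]_{A_r}$ almost surely for every $t$. Fix $\lambda>0$ and introduce the first-passage time $\tau=\tau_\lambda:=\inf\{t:|X_t|>\lambda\}$. Then, by optional stopping applied to the uniformly integrable martingale $Z_t$ and by the $A_r$ bound,
\begin{equation*}
\hat{\Pro}(X^*>\lambda)=\E[Z;\tau<\infty]=\E[Z_\tau;\tau<\infty]\leq [Z]_{A_r}\,\E\!\left[V_\tau^{-(r-1)};\tau<\infty\right].
\end{equation*}
Since $|X_\tau|>\lambda$ on $\{\tau<\infty\}$, the right-hand side is at most $\lambda^{-r}[Z]_{A_r}\,\E[|X_\tau|^rV_\tau^{-(r-1)};\tau<\infty]$. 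Now writing $|X|=(|X|^rZ)^{1/r}\cdot(Z^{-1/(r-1)})^{(r-1)/r}$ and applying the conditional H\"older inequality with exponents $r$ and $r/(r-1)$ to $|X_\tau|\leq \E[|X||\calF_\tau]$ yields
\begin{equation*}
|X_\tau|^r\leq \E[|X|^rZ|\calF_\tau]\cdot V_\tau^{r-1},
\end{equation*}
so the factor $V_\tau^{r-1}$ cancels $V_\tau^{-(r-1)}$ exactly and, after one last use of the tower property, one arrives at the weak-type $(r,r)$ bound
\begin{equation*}
\hat{\Pro}(X^*>\lambda)\leq \lambda^{-r}[Z]_{A_r}\,\hat{\E}\bigl[|X|^r\mathbf{1}_{\{X^*>\lambda\}}\bigr].
\end{equation*}

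To pass from weak-type to strong-type, I would multiply by $p\lambda^{p-1}$ and integrate over $\lambda\in(0,\infty)$; Fubini and a one-line computation of the inner $\lambda$-integral produce
\begin{equation*}
\hat{\E}[(X^*)^p]\leq \frac{p}{p-r}\,[Z]_{A_r}\,\hat{\E}\bigl[|X|^r(X^*)^{p-r}\bigr].
\end{equation*}
H\"older's inequality with exponents $p/r$ and $p/(p-r)$ then separates the right-hand side as $\hat{\E}[|X|^p]^{r/p}\hat{\E}[(X^*)^p]^{(p-r)/p}$; dividing and raising both sides to the $1/r$-th power yields the claimed inequality with constant $[Z]_{A_r}^{1/r}\bigl(p/(p-r)\bigr)^{1/r}$.

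The main technical obstacle is the a priori finiteness of $\hat{\E}[(X^*)^p]$ needed to legitimately divide in the final step; this is handled by the standard device of first truncating $X$ at level $N$ and stopping the martingale at a bounded time, proving the inequality in that setting, and then passing to the limit via monotone convergence and Fatou's lemma. The right-continuity of $(\calF_t)_{t\geq 0}$ is exactly what is required for $\tau_\lambda$ to be a stopping time and for the identity $\E[Z|\calF_{\tau_\lambda}]=Z_{\tau_\lambda}$ to hold; no further regularity of the filtration is invoked, and the argument avoids Bellman-function machinery entirely.
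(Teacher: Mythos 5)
Your proof is correct and arrives at exactly the stated constant, but it takes a genuinely different route from the paper's argument. The paper proves the pointwise domination
\[
|X_t|^r \;\leq\; Z_t\Bigl(\E\bigl[(1/Z)^{1/(r-1)}\mid\calF_t\bigr]\Bigr)^{r-1}\,\hat{\E}\bigl[\,|X|^r\mid\calF_t\bigr]\;\leq\;[Z]_{A_r}\,\hat{\E}\bigl[\,|X|^r\mid\calF_t\bigr],
\]
obtained by writing $X_t=Z_t\hat{\E}[(1/Z)X\mid\calF_t]$ and applying conditional H\"older under $\hat{\Pro}$, and then invokes Doob's $L^{p/r}$ maximal inequality for the $\hat{\Pro}$-martingale $\hat{\E}[\,|X|^r\mid\calF_t]$ as a black box. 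You instead first derive a weak-type $(r,r)$ bound under $\hat{\Pro}$ via the stopping time $\tau_\lambda$ and then upgrade it to the strong $L^p$ bound by the layer-cake formula followed by H\"older; this is, in effect, re-deriving the weak-to-strong mechanism that sits inside Doob's inequality, but carried out directly in the weighted setting. The two approaches are morally the same computation organized differently: the paper's version is shorter and only needs the $A_r$ condition at deterministic times, while yours is self-contained and makes the weak-type step explicit. One small point worth tightening: you need the $A_r$ bound $Z_\tau V_\tau^{r-1}\leq[Z]_{A_r}$ to hold at the \emph{stopping time} $\tau$, which in continuous time requires passing from the a.s.\ bound at each fixed $t$ to a bound holding simultaneously for all $t$ (using right-continuous versions of $Z_t$ and $V_t$); and with $\tau_\lambda=\inf\{t:|X_t|>\lambda\}$ one only has $|X_{\tau_\lambda}|\geq\lambda$ on $\{\tau_\lambda<\infty\}$ in general, not a strict inequality, though the non-strict bound suffices for your estimate. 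Neither issue affects the validity of the argument or the constant.
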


We give the proof of this result since it is simple and short.  We assume, as we may, that $\|X\|_{L^p(\hat{\Pro})}<\infty$.  A simple calculation (just the definition of conditional expectation) gives that 
$$
\hat{\E}[X\,|\calF_t]=\frac{\E[ZX\,|\calF_t]}{Z_t}
$$
and this holds almost surely with respect to both probability measures $\Pro$ and $\hat{\Pro}$.  Applying this with 
$(1/Z)X$ in place of $X$ we see that 
$$
X_t=Z_t\hat{\E}[(1/Z)X\,|\calF_t]. 
$$
If we let $r_0$ be the conjugate exponent of $r$, H\"older's inequality gives 
\begin{eqnarray*}
|X_t|^{r} &\leq &\left\{Z_t^{r}\left[\hat{\E}\left[\left(\frac{1}{Z} \right)^{r_0}\,|\calF_t \right] \right]^{r -1} 
 \right\}\hat{\E}[|X|^{r}\, |\calF_t]\\
 &=& Z_t\left[\E\left[\left(\frac{1}{Z} \right)^{r_0-1}\,|\calF_t \right] \right]^{r -1}\hat{\E}[|X|^{r}\, |\calF_t]\\
 &\leq &  [Z]_{A_r}\hat{\E}[|X|^{r}\, |\calF_t]
\end{eqnarray*}
 Now, applying Doob's inequity with $p/{r}>1$ to the $\hat{\Pro}$ martingale in the second term gives the inequality.

As Theorem \ref{mainth}, this proposition can also be obtained using the Bellman functions techniques as above, bypassing Doob's inequality. 

We now address the martingale version of Theorem \ref{weights}. While Theorem \ref{MuIzuKaz} holds for arbitrary martingales,  it is proved in Uchiyama \cite{Uch} that the $A_{p-\varepsilon}$ result does not hold in general for arbitrary martingales but it does so if the martingales have continuous trajectories (Brownian martingales) or if they are regular. Recall that the nondecreasing  filtration $(\calF_n)_{n\geq 0}$ on the probability space $\left(\Omega, \calF, \Pro\right)$ with ${\bigvee}_{n=1}^{\infty}\calF_n=\calF$ is said to be regular if $\calF_n$ is atomic for each $n$ and there is a universal constant $C_0$ such that $\Pro(A)/\Pro(B)< C_0$ for any two atoms $A\in \calF_{n-1}$ and $B\in \calF_n$ such that $B\subset A$. Dyatic filtrations on $\R^d$ are for example regular with $C_0=2^d$.  The proof of Theorem \ref{weights} applies to $A_p$ weights on a regular filtration and we obtain the following theorem which gives a  martingale version of  Buckley's inequality. 

\begin{theorem} Let $X_n=\E\left[X\, |\calF_n\right]$ be the martingale generated by the $\Pro$--integrable random variable $X$ and assume that $(\calF_n)_{n\geq 0}$ is regular with constant  $C_0$.  Suppose $Z\in A_{p}$, $1<p<\infty$. There is a constant $C$ depending on $C_0$ such that 
\begin{equation}\label{weightedoob1}
\|X^{*}\|_{L^p(\hat{\Pro})}\leq \frac{Cp}{p-1}[Z]_{A_p}^{1/(p-1)} \|X\|_{L^p(\hat{\Pro})}.   
\end{equation}
Except for the constant $C$, this inequality is sharp.   The same result holds for martingales with continuous trajectories for some universal constant $C$. 
\end{theorem}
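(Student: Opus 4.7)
The plan is to derive this Buckley-type bound from Theorem \ref{MuIzuKaz} combined with a martingale analogue of Theorem \ref{weights}. Because Theorem \ref{MuIzuKaz} already gives $\|X^*\|_{L^p(\hat{\Pro})} \leq [Z]_{A_r}^{1/r}\bigl(p/(p-r)\bigr)^{1/r}\|X\|_{L^p(\hat{\Pro})}$ for every $r \in (1,p)$ with $Z \in A_r$, the whole problem reduces to producing such an $r$, depending on $[Z]_{A_p}$, for which $[Z]_{A_r}$ is explicitly controlled, in exact parallel with Theorem \ref{weights} and formula \eqref{defr}. Once this is available, substituting the resulting $r$ into Theorem \ref{MuIzuKaz} and repeating the algebraic manipulation carried out in the introduction after Theorem \ref{weights} --- specialised to $q = p$, $\alpha = 0$, so that the exponent $(1-\alpha/d)p'/q$ collapses to $1/(p-1)$ --- yields $[Z]_{A_r}^{1/r}\bigl(p/(p-r)\bigr)^{1/r} \leq \frac{Cp}{p-1}[Z]_{A_p}^{1/(p-1)}$ with $C$ depending only on $C_0$, which is the desired inequality.

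The core work is therefore to adapt Theorem \ref{weights} to a regular filtration $(\calF_n)$ with regularity constant $C_0$. Vasyunin's function from Lemma \ref{Vasyunin} is a purely analytic object and is unchanged. What must be reworked is the geometric Lemma \ref{geomlemma} and the inductive concavity estimate \eqref{convB}. If $A \in \calF_{n-1}$ splits into children $B_1, \ldots, B_k \in \calF_n$, regularity forces the conditional weights $\lambda_j = \Pro(B_j)/\Pro(A)$ to lie in $[1/C_0, 1-1/C_0]$ whenever $k \geq 2$, and consequently $(\mathtt{w}_{n-1}, \mathtt{v}_{n-1})|_A$ is a convex combination of $(\mathtt{w}_n, \mathtt{v}_n)|_{B_j}$ with coefficients uniformly separated from $0$ and $1$. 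This is exactly the hypothesis of Lemma \ref{geomlemma} with $2^{-d}$ replaced by $1/C_0$; the same estimate on extreme coordinates shows that the convex hull of $\{(\mathtt{w}_{n-1},\mathtt{v}_{n-1})|_A\} \cup \{(\mathtt{w}_n,\mathtt{v}_n)|_{B_j}\}$ lies in $\Omega_{K(C_0)c}$ for some $K(C_0) = C_0^{O(p)}$, on which $B$ is locally concave. Inducting over $n$ produces the analogue of \eqref{convB}, and the rest of Section~2 carries over verbatim to give $[Z]_{A_r} \leq C(C_0)^{r+1}[Z]_{A_p}^{(r-1)/(p-1)}$ for $r$ defined as in \eqref{defr} with $K(C_0)$ in place of $2^{pd/(p-1)}p^{p/(p-1)}$.

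The one genuine difficulty is that atoms of $\calF_{n-1}$ may split into many children with very unequal masses, so the clean two-point statement of Lemma \ref{geomlemma} must be upgraded to a multi-point convex-hull statement; this is where the regularity constant $C_0$ enters the constant $K(C_0)$, but no other structural change is required. For the continuous-trajectory case the discrete induction is replaced either by a direct appeal to Uchiyama's \cite{Uch} $A_{p-\varepsilon}$ theorem for continuous-path martingales or by an It\^o-type argument applied to the Bellman function $B$, whose local concavity translates into a non-positive drift; both routes yield the same estimate for $[Z]_{A_r}$ and hence the same final bound. Sharpness modulo the universal constant $C$ follows from transferring Buckley's classical extremal weights to dyadic martingales on $[0,1]$ via the standard correspondence, which simultaneously realises the asymptotics $[Z]_{A_p}^{1/(p-1)}$ and $p/(p-1)$.
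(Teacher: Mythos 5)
Your proposal follows exactly the route the paper itself indicates: the text says only that the proof of Theorem \ref{weights} adapts to regular filtrations, and that the resulting $A_{p-\varepsilon}$ statement combines with Theorem \ref{MuIzuKaz} and the algebra from the introduction (specialized to $q=p$, $\alpha=0$) to give the theorem. You have filled in the substance correctly: the regularity constant $C_0$ enters precisely where $2^d$ entered the dyadic argument, and for the continuous-path case the appeal to Uchiyama \cite{Uch} (or an It\^o version of the same Bellman estimate) is exactly what the paper has in mind.

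One remark that you may find clarifying: the ``multi-point convex hull obstacle'' you flag as the one genuine difficulty is in fact not an obstacle at all, and is handled by an argument that is actually simpler than the two-case reasoning in Lemma \ref{geomlemma}. If $\mathtt{P}=\sum_{j}\lambda_j\mathtt{P}_j$ with each $\lambda_j\geq 1/C_0$ and all of $\mathtt{P},\mathtt{P}_1,\dots,\mathtt{P}_k$ in $\Omega_c$, then from $\mathtt{P}_x\geq \lambda_j(\mathtt{P}_j)_x$ we get $(\mathtt{P}_j)_x\leq C_0\mathtt{P}_x$ for every $j$, and likewise $(\mathtt{P}_j)_y\leq C_0\mathtt{P}_y$. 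Hence any $\mathtt{z}$ in the convex hull of the $\mathtt{P}_j$ satisfies $\mathtt{z}_x\leq C_0\mathtt{P}_x$ and $\mathtt{z}_y\leq C_0\mathtt{P}_y$, so $\mathtt{z}_x\mathtt{z}_y^{\beta-1}\leq C_0^{\beta}\mathtt{P}_x\mathtt{P}_y^{\beta-1}\leq C_0^{\beta}c$; the lower bound $\mathtt{z}_x\mathtt{z}_y^{\beta-1}\geq 1$ is automatic because $\{xy^{\beta-1}\geq 1\}$ is convex. Thus the entire convex hull lies in $\Omega_{C_0^{\beta}c}$, a convex set on which local concavity of Vasyunin's $B$ becomes genuine concavity, and Jensen's inequality gives the analogue of \eqref{convB} in one step with no induction on the number of children and regardless of how unequal the masses are. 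This produces $K(C_0)=C_0^{\beta}$ with $\beta=p$, matching $2^{\beta d}$ when $C_0=2^d$, and confirms the bound $[Z]_{A_r}\leq C(C_0)^{r}[Z]_{A_p}^{(r-1)/(p-1)}$ you assert. With that simplification noted, the rest of your argument --- the substitution into Theorem \ref{MuIzuKaz}, the specialization $q-s=r$, $q/s=p/(p-r)$, the collapse of $(1-\alpha/d)p'/q$ to $1/(p-1)$, and the sharpness via Buckley's extremal weights transferred to the dyadic filtration --- is in order.
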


\section*{Acknowledgment} The results of this paper were obtained during the fall semester of 2013 when the second-named author visited Purdue University.

\end{document}